\renewcommand{\baselinestretch}{\baselinestretch}
\renewcommand{\baselinestretch}{1.2}
\newtheorem{theorem}{Theorem}[section]
\newtheorem{lemma}[theorem]{Lemma}
\newtheorem{proposition}[theorem]{Proposition}
\newtheorem{corollary}[theorem]{Corollary}
\theoremstyle{definition}
\newtheorem{conjecture}{Conjecture}[section]
\newlist{inlineroman}{enumerate*}{1}
\setlist[inlineroman]{itemjoin*={{, and }},afterlabel=~,label=\roman*.}
 \numberwithin{dummy}{section}
\newcommand{\tr}{\operatorname{tr}}
\newcommand{\Frob}{\operatorname{Frob}}
\newenvironment{red}{\relax\color{red}}{\relax}
\newenvironment{blue}{\relax\color{blue}}{\hspace*{.5ex}\relax}
\newcommand{\ber}{\begin{red}}
\newcommand{\er}{\end{red}}
\newcommand{\beb}{\begin{blue}}
\newcommand{\eb}{\end{blue}}
\begin{document}
	
\newcommand{\pr}{\partial}
\newcommand{\nl}{\vskip 1pc}
\newcommand{\co}{\mbox{co}}
\newcommand{\ol}{\overline}
\newcommand{\om}{\Omega}
\newcommand{\ra}{\rightarrow}
\newcommand{\epsil}{\varepsilon}
\makeatletter
\newcommand*{\rom}[1]{\expandafter\@slowromancap\romannumeral #1@}
\newcommand{\powerset}{\raisebox{.15\baselineskip}{\Large\ensuremath{\wp}}}
\newcounter{counter}       
\newcommand{\upperRomannumeral}[1]{\setcounter{counter}{#1}\Roman{counter}}
\newcommand{\lowerromannumeral}[1]{\setcounter{counter}{#1}\roman{counter}}
\newcommand*\circled[1]{\tikz[baseline=(char.base)]{
		\node[shape=circle,draw,inner sep=1pt] (char) {#1};}}
\makeatother

\title{On the structure of prime-detecting quasimodular forms in higher levels}

\author{Yeong-Wook Kwon}

\author{Youngmin Lee}

\address{Institute of Basic Science, Korea University, 145 Anam-ro, Seongbuk-gu, Seoul, 02841, Republic of Korea}
\email{ywkwon196884@korea.ac.kr}

\address{Department of Mathematics, Kyonggi University,	154-42, Gwanggyosan-ro, Yeongtong-gu, Suwon-si, Gyeonggi-do, Republic of Korea}

\email{youngminlee@kyonggi.ac.kr}

 \thanks{Keywords: quasimodular forms, prime-detecting, linear independence of characters}
  \thanks{2020
 Mathematics Subject Classification: 11F11, 11F30, 11F80}
  \thanks{}
  
\begin{abstract}
   Craig, van Ittersum, and Ono conjectured that every prime-detecting quasimodular form of level $1$ is a quasimodular Eisenstein series. This conjecture was proved by Kane--Krishnamoorthy--Lau and by van Ittersum--Mauth--Ono--Singh independently. However, in higher levels, prime-detecting quasimodular forms need not be Eisenstein. Recently, Kane, Krishnamoorthy, and Lau formulated a natural higher level analogue of the above conjecture and proved it by analytic methods. In a similar direction, but via an alternative approach based on the independence of characters of $\ell$-adic Galois representations, we prove that any prime-detecting quasimodular form on $\Gamma_{0}(N)$ belongs to the direct sum of the spaces of quasimodular Eisenstein series and quasimodular oldforms. 
    Moreover, for a quasimodular form $f$ that is not prime-detecting, we give an upper bound for the number of primes $p$ less than $X$ for which the $p$-th Fourier coefficient of a quasimodular form vanishes.
\end{abstract}

\maketitle
\section{Introduction}\label{sec-intro}

\emph{Modular forms} have attracted wide interest because of their connections with many areas of mathematics.
These connections have motivated numerous generalizations of the concept of a modular form. 
One such generalization is a \emph{quasimodular form}. Roughly speaking, quasimodular forms are modular objects that are closed under differentiation and can be described using modular forms together with $E_{2}$ and their derivatives.
A brief review of quasimodular forms will be given in Section \ref{sec-prel}.
Extensive research on quasimodular forms has shown that, like modular forms, they arise across a wide range of mathematical contexts; see, for example, \cite{BO00,BFGW21,CMZ18,I20,IR24,KZ95}.

Recently, Craig, van Ittersum, and Ono studied a special type of quasimodular forms for the full modular group, which they call \emph{prime-detecting quasimodular forms}.
They showed that some prime-detecting quasimodular forms arise from the following partition generating series.
For $a\in\mathbb{Z}_{>0}$, define
$$\mathcal{U}_{a}(q):=\sum_{0<s_{1}<s_{2}<\cdots<s_{a}}\frac{q^{s_{1}+s_{2}+\cdots+s_{a}}}{(1-q^{s_{1}})^{2}(1-q^{s_{2}})^{2}\cdots (1-q^{s_{a}})^{2}}=\sum_{n=1}^{\infty}M_{a}(n)q^{n}.$$
The coefficients $M_{a}(n)$ are called the \emph{MacMahon partition functions}.
They proved that the function
$$(D^{2}-3D+2)\mathcal{U}_{1}(q)-8\mathcal{U}_{2}(q)$$
is a prime-detecting quasimodular form for $\mathrm{SL}_{2}(\mathbb{Z})$. Comparing Fourier coefficients yields the following: an integer $n\geq 2$ is a prime if and only if
\begin{equation*}
    (n^{2}-3n+2)M_{1}(n)-8M_{2}(n)=0.
\end{equation*}
In fact, this is the simplest case of an infinite family of equations involving MacMahon's partition functions whose solutions are precisely the prime numbers; see \cite{CIO24} for details. 
Further prime-detecting identities involving MacMahon partition functions have been established by several authors such as Craig \cite{C24}, Gomez \cite{G25}, and Kang, Matsusaka, and Shin \cite{KMS25}.

Beyond constructing examples, Craig, van Ittersum, and Ono also proposed the following structural conjecture: every prime-detecting quasimodular form for the full modular group is a quasimodular Eisenstein series, i.e., a linear combination of Eisenstein series for $\mathrm{SL}_{2}(\mathbb{Z})$ and their derivatives (see \cite[Conjecture 2]{CIO24}). 
This conjecture was proved by Kane, Krishnamoorthy, and Lau\cite{KKL25} using analytic methods, and van Ittersum, Mauth, Ono and Singh \cite{IMOS25} provided an alternative proof using the theory of $\ell$-adic Galois representations. 

Let $N$ be a positive integer. We call a quasimodular form $f$ on $\Gamma_{0}(N)$ \emph{prime-detecting} if it has a Fourier expansion
$$f(\tau)=\sum_{n=0}^{\infty} a_{f}(n)q^{n}$$
such that, for every integer $n\geq 2$, one has $a_{f}(n)=0$ if and only if $n$ is a prime with $n\nmid N$.
It is important to note that, in the literature, the term “prime-detecting” is sometimes reserved for quasimodular forms with nonnegative Fourier coefficients (e.g. \cite{CIO24}). In this paper, we focus only on the vanishing pattern at primes and do not impose any positivity condition.
Let $\Omega_{N}$ denote the set of all prime-detecting quasimodular forms on $\Gamma_{0}(N)$, and let $\widetilde{M}(N)$ denote the space of quasimodular forms on $\Gamma_{0}(N)$.
As for holomorphic modular forms, $\widetilde{M}(N)$ has a decomposition
$$\widetilde{M}(N)=\widetilde{E}(N)\oplus\widetilde{S}(N),$$
where $\widetilde{E}(N)$ (resp. $\widetilde{S}(N))$ is the space of \emph{quasimodular Eisenstein series} (resp. \emph{quasimodular cusp forms}) on $\Gamma_{0}(N)$. 

In view of the conjecture of Craig, van Ittersum and Ono, it is natural to ask whether $\Omega_{N}\subset\widetilde{E}(N)$ holds.
However, this inclusion is not true in general. In other words, there exists a prime-detecting quasimodular form with nonzero cuspidal part.
We give a concrete example. Given a positive even integer $k$, define
$$G_{k}^{(N)}(\tau):=\sum_{n=1}^{\infty}\left(\sum\limits_{\substack{d\mid n \\ \gcd(n/d,N)=1}}d^{k-1}\right)q^{n}.$$
It is known that for positive odd integers $l>k$, the function
$$f_{k,l}^{(N)}(\tau):=(D^{l}+1)G_{k+1}^{(N)}(\tau)-(D^{k}+1)G_{l+1}^{(N)}(\tau)$$
belongs to $\widetilde{E}(N)$ and is prime-detecting. Here $D=\frac{1}{2\pi i}\frac{d}{d\tau}$; see \cite[Sections 3.2 and 4.2]{KMS25}. Fix positive odd integers $l>k$. Then, for any divisor $d$ of $N$ with $d>1$, $f_{k,l}^{(N)}(\tau)+\Delta(d\tau)\in\Omega_{N}$ and its cuspidal part is $\Delta(d\tau)\neq 0$, where $\Delta(\tau)$ is the modular discriminant.

As will be explained in Section \ref{sec-prel}, the cuspidal space $\widetilde{S}(N)$ further decomposes as
$$\widetilde{S}(N)=\widetilde{S}^{\mathrm{new}}(N)\oplus \widetilde{S}^{\mathrm{old}}(N),$$
where $\widetilde{S}^{\mathrm{new}}(N)$ (resp. $\widetilde{S}^{\mathrm{old}}(N)$) is the space of \emph{quasimodular newforms} (resp. \emph{quasimodular oldforms}) on $\Gamma_{0}(N)$.
One can check that, in the above example, the cuspidal part of $f_{k,l}^{(N)}(\tau)+\Delta(d\tau)$ lies in $\widetilde{S}^{\mathrm{old}}(N)$.
Thus, one may expect that the failure of the inclusion $\Omega_{N}\subset \widetilde{E}(N)$ is caused by contributions from quasimodular oldforms.
This suggests the following refinement:
\begin{conjecture}\label{conj-newvanish}
    Let $N$ be a positive integer. Then
    \[ \Omega_{N}=\Omega_{N}\cap \left(\widetilde{E}(N) \oplus \widetilde{S}^{\mathrm{old}}(N)\right). \]
\end{conjecture}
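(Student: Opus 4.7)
The plan is to establish Conjecture \ref{conj-newvanish} by showing that the newform cuspidal part of any $f\in\Omega_{N}$ must vanish, using linear independence of characters of $\ell$-adic Galois representations as the core tool. First, using the quasimodular decomposition I would write $f=\sum_{j=0}^{r}D^{j}h_{j}$ with $h_{j}$ a modular form on $\Gamma_{0}(N)$ of weight $K-2j$, where $K$ is the total weight of $f$. Each $h_{j}$ then decomposes as $h_{j}=h_{j}^{E}+h_{j}^{\mathrm{new}}+h_{j}^{\mathrm{old}}$ according to the splitting $M(N)=E(N)\oplus S^{\mathrm{new}}(N)\oplus S^{\mathrm{old}}(N)$, and the conjecture reduces to showing $h_{j}^{\mathrm{new}}=0$ for every $j$.

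Next, I would examine $a_{f}(p)$ at primes $p\nmid N$. The key observation is that for any newform $g$ at a level $N_{0}\mid N$ with $N_{0}<N$ and any divisor $d\mid N/N_{0}$ with $d>1$, the oldform $g(d\tau)$ satisfies $a_{p}(g(d\tau))=0$ whenever $p\nmid N$, since $d\nmid p$ in this regime. Writing each $h_{j}^{\mathrm{old}}$ in the standard oldform basis $\{g(d\tau)\}$, only the $d=1$ terms survive at such primes, and one obtains
\[ a_{f}(p)=a_{f^{E}}(p)+\sum_{N_{0}\mid N}\sum_{g}\sum_{j=0}^{r} c_{j,N_{0},g}\,p^{j}\,a_{p}(g), \]
where the inner sum is over newforms $g$ of level $N_{0}$ and weight $K-2j$, the $c_{j,N,g}$ are the coefficients of $h_{j}^{\mathrm{new}}$, and for $N_{0}<N$ the $c_{j,N_{0},g}$ come from the $d=1$ components of $h_{j}^{\mathrm{old}}$. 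The prime-detecting hypothesis forces this expression to vanish for every prime $p\nmid N$.

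The crux of the argument is then linear independence. I would interpret $p^{j}\,a_{p}(g)=\tr\bigl((\chi_{\ell}^{j}\otimes\rho_{g})(\Frob_{p})\bigr)$, where $\rho_{g}$ is the $\ell$-adic Galois representation attached to $g$ and $\chi_{\ell}$ is the cyclotomic character. Using that $\rho_{g}$ has Hodge--Tate weights $\{0,k_{g}-1\}$, one checks that the twists $\chi_{\ell}^{j}\otimes\rho_{g}$ attached to distinct pairs $(j,g)$ (as $g$ varies over newforms at the divisor levels $N_{0}\mid N$ and $j$ varies over $\{0,\ldots,r\}$) are pairwise non-isomorphic irreducible two-dimensional Galois representations, whereas $a_{f^{E}}(p)$ is a combination of traces of reducible abelian representations of a genuinely different type. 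Chebotarev density and the linear independence of characters of non-isomorphic semisimple representations then split the vanishing into an Eisenstein part and a cuspidal part, and force every coefficient $c_{j,N_{0},g}$ in the cuspidal sum to be zero. In particular $c_{j,N,g}=0$, so $h_{j}^{\mathrm{new}}=0$ for every $j$ and $f^{\mathrm{new}}=0$, which gives $f\in\widetilde{E}(N)\oplus\widetilde{S}^{\mathrm{old}}(N)$.

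I expect the main obstacle to be the pairwise non-isomorphism of the twisted representations $\chi_{\ell}^{j}\otimes\rho_{g}$ across newforms of possibly different weights and levels dividing $N$. Because the total weight constraint $k_{g}+2j=K$ is built into the quasimodular decomposition, the determinants $\det(\chi_{\ell}^{j}\otimes\rho_{g})=\chi_{\ell}^{K-1}$ coincide across the decomposition, and so determinants alone cannot separate these twists; one genuinely needs Hodge--Tate weights (or an equivalent $p$-adic Hodge theoretic invariant) to distinguish them. A secondary, more routine, task is to cleanly separate the Eisenstein contribution from the cuspidal one, which reduces to the irreducibility/reducibility dichotomy for the associated Galois representations together with standard linear independence for Dirichlet characters on primes.
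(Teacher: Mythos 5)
Your overall architecture coincides with the paper's proof of Theorem \ref{thm1} (which implies the conjecture): kill the oldform and imprimitive Eisenstein contributions at primes $p\nmid N$ (Lemmas \ref{lem-quasiold} and \ref{lem-quasioldeis}), rewrite $a_f(p)=0$ as a vanishing linear combination of traces of twists $\rho_{g,\lambda}\otimes\chi_\ell^{j}$ and of abelian characters at $\mathrm{Frob}_p$, and invoke linear independence of characters (Lemma \ref{lem-charindep}). The one place you genuinely diverge is the pairwise non-isomorphism of the twists: you propose to separate $\chi_\ell^{j}\otimes\rho_g$ from $\chi_\ell^{j'}\otimes\rho_{g'}$ by Hodge--Tate weights, whereas the paper's Lemma \ref{lem-rigidity} passes to the automorphic representations, normalizes to unitary twists, applies strong multiplicity one for $\mathrm{GL}_2$, and recovers the weight from the minimal $\mathrm{SO}_2(\mathbb{R})$-type. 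Your route is valid and arguably shorter: the HT weights of $\chi_\ell^{j}\otimes\rho_{g,\lambda}$ are $\{j,\,j+k_g-1\}$ with $k_g\ge 2$, so an isomorphism forces $j=j'$ and $k_g=k_{g'}$, and then $g=g'$ by strong multiplicity one for newforms. It does require importing the de Rham/crystalline property of $\rho_{g,\lambda}$, which is a standard but nontrivial input beyond what Proposition \ref{prop-cuspgalrep} records; you are also right that determinants alone cannot do the job. (A cosmetic point: a general $f\in\widetilde{M}(N)$ is a sum over several weights, so the identity $k_g+2j=K$ and the coincidence of all determinants hold only weight-by-weight; the argument is unaffected.)

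There is one step you pass over that would fail as written: the coefficients $c_{j,N_0,g}$ are arbitrary complex numbers, while the traces $\mathrm{tr}(\sigma(\mathrm{Frob}_p))$ live in an $\ell$-adic field $K_\lambda$, so the relation $\sum c\,\mathrm{tr}(\sigma(\mathrm{Frob}_p))=0$ is not a priori a linear dependence over a single field to which Lemma \ref{lem-charindep} applies. The paper resolves this with a preliminary reduction (Lemmas \ref{lem-reduction} and \ref{lem-coefffield}): one first shows that any $f$ with $a_f(p)=0$ for all $p\nmid N$ is a $\mathbb{C}$-linear combination of such forms with Fourier coefficients in $\mathbb{Q}(\zeta_N)$, and then that the coefficients in the Eisenstein/newform expansion of such a form lie in a number field $K$, which embeds into $K_\lambda$. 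Equivalently, since the trace values are algebraic, the solution space of the system of linear conditions is defined over $\overline{\mathbb{Q}}$, so a nonzero complex solution yields a nonzero algebraic one. Some such reduction must be inserted before the linear-independence step; once it is, your argument closes.
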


For a positive integer $N$, let
$$\widetilde{\Omega}_{N}:=\left\{f=\sum_{n=0}^{\infty}a_{f}(n)q^{n}\in\widetilde{M}(N):a_{f}(p)=0~\text{for every prime}~p\nmid N\right\}.$$
Our first main result shows that a slightly stronger result than Conjecture \ref{conj-newvanish} holds.

\begin{theorem}\label{thm1}
    Let $N$ be a positive integer. Then, 
    \[ \widetilde{\Omega}_{N}=\widetilde{\Omega}_{N}\cap \left(\widetilde{E}(N) \oplus \widetilde{S}^{\mathrm{old}}(N)\right). \]
\end{theorem}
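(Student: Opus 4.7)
The plan is to decompose $f\in\widetilde{\Omega}_{N}$ in a basis of derivatives of Hecke eigenforms, translate the vanishing condition into an identity among Frobenius traces of $\ell$-adic Galois representations, and then apply Chebotarev density together with linear independence of characters to force the newform part of $f$ to vanish.

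Concretely, the cuspidal space $\widetilde{S}(N)$ has a basis $\{D^{j}g(d\tau)\}$, where $g$ ranges over normalized cuspidal newforms of every allowed weight and level $M$ dividing $N$, $d$ over positive divisors of $N/M$, and $j$ over a bounded range of non-negative integers; the pieces with $(M,d)=(N,1)$ span $\widetilde{S}^{\mathrm{new}}(N)$, while the rest lie in $\widetilde{S}^{\mathrm{old}}(N)$. Expanding $f=f_{E}+\sum c_{g,j,d}D^{j}g(d\tau)$ with $f_{E}\in\widetilde{E}(N)$ and using that $D^{j}g(d\tau)$ has vanishing $q^{p}$-coefficient for $p\nmid N$ whenever $d>1$ (since such a $p$ is coprime to $d$), the hypothesis $f\in\widetilde{\Omega}_{N}$ becomes
\[
a_{f_{E}}(p)\;+\;\sum_{M\mid N}\sum_{g\text{ new of level }M}\sum_{j\geq 0}c_{g,j,1}\,p^{j}\,a_{g}(p)\;=\;0\qquad\text{for every prime }p\nmid N.
\]
It thus suffices to prove $c_{g,j,1}=0$ whenever $g$ has level exactly $N$.

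Fix an auxiliary prime $\ell\nmid N$. For each newform $g$ of level $M\mid N$, let $\rho_{g}\colon G_{\mathbb{Q}}\to\mathrm{GL}_{2}(\overline{\mathbb{Q}}_{\ell})$ be the associated Deligne--Serre representation: it is irreducible, has Artin conductor $M$ at primes away from $\ell$, and satisfies $\tr\rho_{g}(\Frob_{p})=a_{g}(p)$ for all $p\nmid M\ell$. Writing $\chi_{\ell}$ for the $\ell$-adic cyclotomic character, one has $p^{j}a_{g}(p)=\tr(\rho_{g}\otimes\chi_{\ell}^{j})(\Frob_{p})$, while $a_{f_{E}}(p)$ is a finite combination of values $\eta(\Frob_{p})$ for one-dimensional $\ell$-adic characters $\eta$ built from Dirichlet characters and powers of $\chi_{\ell}$. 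The displayed identity therefore becomes a linear relation among Frobenius traces for a finite family of semisimple $\ell$-adic representations valid at almost every prime, and Chebotarev density promotes it to an identity of characters on $G_{\mathbb{Q}}$. By the classical linear independence of characters of pairwise non-isomorphic semisimple representations, the total coefficient inside each isomorphism class must vanish.

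To finish, I isolate each $c_{g,j,1}$ with $g$ of level $N$ in its own isomorphism class by means of three facts: (i) $\rho_{g}\otimes\chi_{\ell}^{j}$ is irreducible and hence never isomorphic to the reducible one-dimensional characters supplied by $f_{E}$; (ii) the Artin conductor of $\rho_{g}\otimes\chi_{\ell}^{j}$ away from $\ell$ is $M$, so representations coming from newforms of strictly smaller level lie in distinct classes from level-$N$ ones; and (iii) within level $N$, the Hodge--Tate weights of $\rho_{g}\otimes\chi_{\ell}^{j}$ at $\ell$ are $\{j,k_{g}-1+j\}$, which determine both $j$ and the weight $k_{g}$, while strong multiplicity one forces distinct level-$N$ newforms of the same weight to give non-isomorphic $\rho_{g}$. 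Combining these, each $c_{g,j,1}$ with $g$ of level $N$ is alone in its class and must be zero, proving $f\in\widetilde{E}(N)\oplus\widetilde{S}^{\mathrm{old}}(N)$. The main technical obstacle I anticipate is fact~(iii): ruling out accidental isomorphisms $\rho_{g_{1}}\otimes\chi_{\ell}^{j_{1}}\cong\rho_{g_{2}}\otimes\chi_{\ell}^{j_{2}}$ between distinct level-$N$ newforms of possibly different weights. The Hodge--Tate weight comparison handles weights $\geq 2$ cleanly, but weight-$1$ eigenforms (whose Galois representations have finite image and Hodge--Tate weights $\{0,0\}$) require separate attention, most likely via Artin-conductor bookkeeping. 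The remaining work---recording the basis of Step~1 in each weight-graded piece $\widetilde{M}_{k}(N)$ and verifying the conductor and Hodge--Tate statements used above---is routine but must be spelled out carefully.
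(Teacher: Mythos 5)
Your overall strategy is the same as the paper's: expand $f$ in the basis of $D^{j}$-derivatives of Eisenstein series and newforms, discard the terms whose $p$-th coefficients vanish automatically for $p\nmid N$ (the $g(n\tau)$ with $n>1$ and the imprimitive Eisenstein series), convert the surviving relation into a vanishing linear combination of Frobenius traces of semisimple $\ell$-adic representations, and invoke linear independence of characters. Where you genuinely diverge is the rigidity step ensuring that the two-dimensional constituents $\rho_{g}\otimes\chi_{\ell}^{j}$ are pairwise non-isomorphic: you propose reading off $j$ and $k_{g}$ from the Hodge--Tate weights at $\ell$ and then applying strong multiplicity one, whereas the paper's Lemma \ref{lem-rigidity} argues automorphically, comparing Satake parameters of the unitarized representations $\Pi_{g}$ and their archimedean $\mathrm{SO}_{2}(\mathbb{R})$-types. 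Your route is shorter if one grants Faltings' computation of the Hodge--Tate weights; note also that your worry about weight-one eigenforms is vacuous here, since only even weights $\geq 2$ occur in $\widetilde{S}(N)$ for $\Gamma_{0}(N)$.

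Two points need repair. First, you have misread the decomposition: in this paper $\widetilde{S}^{\mathrm{new}}(N)$ is spanned by $D^{j}g(\tau)$ for newforms $g$ of \emph{every} level $L\mid N$ (only the translates $g(n\tau)$ with $n>1$ are declared old; see \eqref{eq-qmfnew}), so it does not suffice to prove $c_{g,j,1}=0$ for $g$ of level exactly $N$. Your facts (i)--(iii) do in fact isolate every pair $(g,j)$ regardless of level (strong multiplicity one separates newforms of different levels just as well as your conductor bookkeeping), so the machinery proves the full statement, but the reduction you announce is to a strictly weaker goal. Second, the coefficients $c_{g,j,1}$ and the Eisenstein coefficients are a priori arbitrary complex numbers, while the traces $\tr(\sigma_{i}(\Frob_{p}))$ live in a finite extension of $\mathbb{Q}_{\ell}$; linear independence of characters cannot be applied directly to a relation mixing the two. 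The paper devotes Lemmas \ref{lem-reduction} and \ref{lem-coefffield} to descending to coefficients in a number field $K$ that embeds into $K_{\lambda}$; alternatively, one can first establish $K$-linear independence of the functions $p\mapsto\tr(\sigma_{i}(\Frob_{p}))$ (whose values lie in $K$) and then extend scalars to $\mathbb{C}$. Either way, this step must appear explicitly; your write-up omits it.
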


The following corollary is an immediate consequence of Theorem \ref{thm1}.

\begin{corollary}\label{cor-conjtrue}
    Conjecture \ref{conj-newvanish} is true.
\end{corollary}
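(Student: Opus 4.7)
The plan is to derive Corollary \ref{cor-conjtrue} as an immediate consequence of Theorem \ref{thm1}, via the obvious set-theoretic inclusion $\Omega_{N}\subset\widetilde{\Omega}_{N}$. Since Conjecture \ref{conj-newvanish} concerns the smaller class of prime-detecting forms while Theorem \ref{thm1} already treats the larger class $\widetilde{\Omega}_{N}$, no further analytic or algebraic input is required; the entire content lies in Theorem \ref{thm1}.

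First, I would unwind the definitions to check that $\Omega_{N}\subset\widetilde{\Omega}_{N}$. If $f=\sum_{n\geq 0}a_{f}(n)q^{n}\in\Omega_{N}$, then by the definition of prime-detecting, $a_{f}(n)=0$ whenever $n$ is a prime with $n\nmid N$ (the converse implication, although built into the definition of prime-detecting, will not be needed here). This is precisely the condition defining $\widetilde{\Omega}_{N}$, so $\Omega_{N}\subset\widetilde{\Omega}_{N}$.

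Next, I would invoke Theorem \ref{thm1}. Combining it with the inclusion above gives
\[
\Omega_{N}\subset\widetilde{\Omega}_{N}=\widetilde{\Omega}_{N}\cap\bigl(\widetilde{E}(N)\oplus\widetilde{S}^{\mathrm{old}}(N)\bigr)\subset\widetilde{E}(N)\oplus\widetilde{S}^{\mathrm{old}}(N).
\]
Intersecting both sides with $\Omega_{N}$ produces $\Omega_{N}\subset\Omega_{N}\cap(\widetilde{E}(N)\oplus\widetilde{S}^{\mathrm{old}}(N))$, and the reverse containment is tautological. The resulting equality is exactly the statement of Conjecture \ref{conj-newvanish}.

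There is no substantive obstacle in this derivation. The only point worth emphasizing is that Theorem \ref{thm1} is strictly stronger than Conjecture \ref{conj-newvanish}: it requires only the one-way vanishing $a_{f}(p)=0$ at primes $p\nmid N$, dropping both the positivity condition on Fourier coefficients and the converse ``only if'' direction encoded in the definition of prime-detecting. This asymmetry is precisely what makes the corollary a formal consequence of the theorem rather than a result demanding independent work.
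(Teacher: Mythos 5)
Your proposal is correct and matches the paper exactly: the paper also treats the corollary as an immediate consequence of Theorem \ref{thm1} via the obvious inclusion $\Omega_{N}\subset\widetilde{\Omega}_{N}$.
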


The proof of Theorem \ref{thm1} is based on $\ell$-adic Galois representations, especially the linear independence of characters of $G_{\mathbb{Q}}$. It can be summarized as follows: Starting with $f\in\Omega_{N}$, we write
$$f=f_{E}+f_{S}^{\mathrm{new}}+f_{S}^{\mathrm{old}}$$
using the decomposition
$$\widetilde{M}(N)=\widetilde{E}(N)\oplus\widetilde{S}^{\mathrm{new}}(N)\oplus\widetilde{S}^{\mathrm{old}}(N).$$
To prove Theorem \ref{thm1}, it suffices to show that $f_{S}^{\mathrm{new}}=0$.
The key idea is to interpret, after certain reduction process, the relations $a_{f}(p)=0$ (for $p\nmid N\ell$) as a linear combination of traces of pairwise nonisomorphic representations of $G_{\mathbb{Q}}=\mathrm{Gal}(\overline{\mathbb{Q}}/\mathbb{Q})$ at $\mathrm{Frob}_{p}$, and then to use the linear independence of characters.

In \cite{KKL25-2}, Kane, Krishnamoorthy and Lau proved a result in a broader context, which specializes to Theorem \ref{thm1}; for details, see \cite[Theorem 1.4]{KKL25-2}. 
Their approach is essentially analytic, whereas our proof of Theorem \ref{thm1} is based on the theory of Galois representations, as mentioned above.

Recall that a prime-detecting quasimodular form $f$ satisfies the vanishing condition $a_f(p)=0$ for every prime $p\nmid N$. 
The following theorem shows that this is an exceptional case. 
Once this vanishing fails at a single prime $p_0$ with $p_0\nmid N$, nonvanishing occurs for a set of primes of positive density. 
Under additional hypotheses on $f$, one also obtains a quantitative upper bound for the number of primes $p\leq X$ such that $a_f(p)=0$. 

\begin{theorem}\label{thm: quan-1}
Let $f$ be a quasimodular form on $\Gamma_0(N)$.
Assume that there exists a prime $p_0\nmid N$ such that $a_f(p_0)\neq 0$.
Then, the following holds. 

\begin{enumerate}

    \item[\textnormal{(i)}] The set of primes $p$ for which  
    \[ a_f(p)\neq 0 \]
has a positive density.

    \item[\textnormal{(ii)}] Assume that $f=\sum_{i\in I}c_i D^{m_i}f_i$,
    with $I$ finite, $c_i\in\mathbb{C}^{\times}$, and $m_i\in\mathbb{Z}_{\ge 0}$, where each $f_i$ is either an Eisenstein series
or a non-CM Hecke eigenform. Assume that at least one $f_i$ is a non-CM Hecke eigenform and that for $i\neq j$,
the form $f_i$ is not Galois conjugate to any Dirichlet twist of $f_j$. Then, there exists $\delta>0$ such that
\[ \# \{ p\leq X : a_f(p)=0\} = O\left(\frac{X/\log X}{\epsilon(X)^{\delta}}\right), \]
    where 
    \[\epsilon(X):=(\log X)(\log \log X)^{-2}(\log \log \log X)^{-1}. \]

\end{enumerate}
\end{theorem}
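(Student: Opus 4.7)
The plan is to translate the vanishing condition $a_f(p)=0$ into a Frobenius condition on a compatible system of $\ell$-adic Galois representations, and then apply the Chebotarev density theorem---qualitatively for (i), and in an effective form for (ii). Decomposing $f$ in a Hecke eigenbasis and using $a_{D^{m}g}(p)=p^m a_g(p)$, one writes $a_f(p)=\sum_i c_i p^{m_i} a_{f_i}(p)$ for $p\nmid N$, where each $f_i$ is a Hecke eigenform (Eisenstein or cuspidal). Fix an auxiliary prime $\ell\nmid N$, let $\rho_{f_i,\ell}$ be the $\ell$-adic Galois representation attached to $f_i$, and let $\chi_{\mathrm{cyc}}$ denote the $\ell$-adic cyclotomic character, so $\chi_{\mathrm{cyc}}(\operatorname{Frob}_p)=p$. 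Setting $\sigma_i:=\rho_{f_i,\ell}\otimes\chi_{\mathrm{cyc}}^{m_i}$, the factor $p^{m_i}$ is absorbed into the cyclotomic twist, and we obtain
$$a_f(p)=\sum_i c_i\operatorname{tr}\bigl(\sigma_i(\operatorname{Frob}_p)\bigr)\qquad (p\nmid N\ell),$$
so that $a_f(p)=0$ is now a purely Galois-theoretic condition.

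For (i), after reducing modulo a sufficiently large power of $\ell$, the map $g\mapsto\sum_i c_i\operatorname{tr}(\sigma_i(g))$ descends to a class function on a finite Galois quotient of $G_{\mathbb{Q}}$. Since $a_f(p_0)\neq 0$ for some $p_0\nmid N\ell$, this class function is not identically zero on the quotient, and the qualitative Chebotarev density theorem then delivers a positive density of primes $p$ with $a_f(p)\neq 0$.

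For (ii), the non-CM and no-twist hypotheses let me invoke Serre's open image theorem for non-CM Hecke eigenforms together with the Ribet--Momose independence theorem for products of pairwise untwist-equivalent non-CM $\ell$-adic representations: the image of $\bigoplus_i \sigma_i$ is open in the natural product target. Hence the vanishing locus $Z:=\{g:\sum_i c_i\operatorname{tr}(\sigma_i(g))=0\}$ is contained in a proper closed subvariety of the image, and its relative Haar measure can be bounded explicitly in terms of the residue characteristic. Applying an unconditional effective Chebotarev density theorem of Brun--Titchmarsh type, in the style of Thorner--Zaman, to a mod-$\ell^n$ quotient of $\bigoplus_i \sigma_i$ with $\ell^n$ calibrated as a function of $X$, bounds the count of $p\leq X$ with $\operatorname{Frob}_p\in Z$ and produces the stated estimate with $\epsilon(X)=(\log X)(\log\log X)^{-2}(\log\log\log X)^{-1}$.

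The main obstacle is the quantitative calibration in (ii). The precise shape of $\epsilon(X)$ reflects the best currently available unconditional error terms in effective Chebotarev for Artin representations of growing conductor, and producing some $\delta>0$ requires simultaneously keeping the mod-$\ell^n$ quotient large enough for $Z$ to genuinely drop in dimension (so that its relative measure is small) and small enough that the Thorner--Zaman main term dominates the error. A second subtlety is mixing the abelian Eisenstein pieces with the large non-abelian cuspidal image without undoing this dimension drop---this is precisely where the no-twist hypothesis and Ribet--Momose independence are essential, since without them the $\sigma_i$ could coincide or be correlated enough that $\sum_i c_i\operatorname{tr}\sigma_i$ vanishes on a positive-measure subgroup.
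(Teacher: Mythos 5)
Your overall architecture matches the paper's: translate $a_f(p)$ into $\sum_i c_i\operatorname{tr}\bigl((\rho_{f_i,\lambda}\otimes\chi_\ell^{m_i})(\operatorname{Frob}_p)\bigr)$, use a big-image/independence theorem for the product representation, and apply Chebotarev (finite and qualitative for (i), effective for (ii)). Part (i) of your sketch is essentially the paper's argument: reduce modulo a power of a place where $a_f(p_0)$ is nonzero, pass to the finite quotient cut out by the mod-$\varpi^n$ reductions, and apply Chebotarev to the Frobenius class of $p_0$. The one missing ingredient in both parts is that the $c_i$ are arbitrary complex numbers, so "reducing the linear combination mod $\ell^n$" is not literally meaningful; the paper handles this by writing $f=\sum_r e_r g_r$ over a $\overline{\mathbb{Q}}$-basis $\{e_r\}$ of the span of the $c_i$, reducing to forms $g_r$ with algebraic coefficients. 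This is a routine but necessary step you should add.

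The genuine gap is in (ii), and you have flagged it yourself: the quantitative step is asserted, not proved. Two things are needed. First, "contained in a proper closed subvariety of the image" is not by itself enough; one must show that the vanishing locus $S=\{(M_i):\sum_i c_i\operatorname{tr}(M_i)=0\}$ has Minkowski dimension at most $N_G-\delta'$ for some explicit $\delta'>0$. The paper does this by fixing an index $j$ with $f_j$ non-CM cuspidal (this is exactly where the hypothesis that at least one $f_i$ is non-CM enters), solving the trace relation for $\operatorname{tr}(M_j)$, and invoking an explicit count: the number of $M\in\mathrm{GL}_2(\mathcal{O}_{j,\lambda_j}/\varpi_j^{n})$ with prescribed trace and determinant is $\ll \#(\mathcal{O}_{j,\lambda_j}/\varpi_j^{n})^{-1}$ times the number with prescribed determinant. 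Combined with the Loeffler/Choi--Lim description of the image (your Ribet--Momose step), this gives $\dim_M S\le N_G-f_j$. Second, your plan to calibrate $\ell^n$ against $X$ and feed a growing family of Artin representations into a Thorner--Zaman-type effective Chebotarev is precisely the hard balancing act you admit you cannot complete; the paper avoids it entirely by citing Serre's Th\'eor\`eme 10, which is an effective Chebotarev theorem for a fixed infinite $\ell$-adic Lie extension together with a conjugation-stable subset $C$ of Minkowski dimension $d<N$, and which outputs the bound with exactly the stated $\epsilon(X)$ and exponent $(N-d)/N$. As written, your proposal does not produce the claimed estimate; replacing the calibration step by Serre's theorem, and supplying the matrix count above, would close the gap.
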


The remainder of this paper is organized as follows. In Section \ref{sec-prel}, we review basic notions and properties of quasimodular forms, and recall the theory of Galois representations attached to modular forms. 
In Section \ref{sec-proof1}, we prove Theorem \ref{thm1}, and in Section \ref{sec-proof2} we prove Theorem \ref{thm: quan-1}.

\section{Preliminaries}\label{sec-prel}

In this section, we briefly review the basic definitions and properties of modular forms and quasimodular forms. For details, we refer the reader to \cite{CS17,DS05,KZ95,Z08}. Let $N$ be a positive integer and let $k$ be an integer. The group $\Gamma_{0}(N)$ is a subgroup of $\mathrm{SL}_{2}(\mathbb{Z})$ defined by
$$\Gamma_{0}(N):=\left\{\begin{pmatrix}
    a & b \\ c & d
\end{pmatrix}\in\mathrm{SL}_{2}(\mathbb{Z}): N\mid c\right\}.$$
A holomorphic function $f$ on $\mathbb{H}$ is a \textit{modular form} of weight $k$ for $\Gamma_{0}(N)$ if it satisfies
\begin{equation}\label{eq-modtr}
    f\left(\frac{a\tau+b}{c\tau+d}\right)=(c\tau+d)^{k}f(\tau)
\end{equation}
for all $\left(\begin{smallmatrix}
        a & b \\ c & d
\end{smallmatrix}\right)\in\Gamma_{0}(N)$ and has moderate growth at each cusp of $\Gamma_{0}(N)$.
Denote by $M_{k}(N)$ the space of modular forms of weight $k$ for $\Gamma_{0}(N)$. This space decomposes as
\begin{equation}\label{eq-moddecomp}
    M_{k}(N)=E_{k}(N)\oplus S_{k}(N),
\end{equation}
where $E_{k}(N)$ is the Eisenstein subspace of $M_{k}(N)$ and $S_{k}(N)$ is the subspace of cusp forms.

For an even integer $k>2$ and a positive integer $N$, let $A_{N,k}$ be the set of pairs $(\varphi,t)$, where $\varphi$ is a primitive Dirichlet character modulo $u$, and $t$ is a positive integer such that $tu^{2}\mid N$. For each $(\varphi,t)\in A_{N,k}$, define
$$E_{k}^{\varphi}(\tau):=\delta_{\varphi,\mathbf{1}_{1}}L(1-k,\varphi)+2\sum_{n=1}^{\infty}\sigma_{k-1}^{\varphi}(n)q^{n},$$
where
$$\sigma_{k-1}^{\varphi}(n)=\sum_{d\mid n}\varphi(d)\overline{\varphi(n/d)}d^{k-1}.$$

\begin{proposition}\label{prop-meisb}
    Let $k>2$ be an even integer, and let $N$ be a positive integer. Then the set
    \begin{equation*}
        \begin{split}
            \mathcal{A}_{k}(N):=\{E_{k}^{\varphi,t}(\tau):=E_{k}^{\varphi}(t\tau):(\varphi,t)\in A_{N,k}\}
        \end{split}
    \end{equation*}
    is a basis for $E_{k}(N)$.
\end{proposition}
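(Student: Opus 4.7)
The plan is to prove Proposition \ref{prop-meisb} along classical lines, following \cite{DS05, CS17}. The argument has three steps: membership in $E_{k}(N)$, linear independence via $L$-functions, and a dimension count.

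First, I would verify that each $E_{k}^{\varphi,t}$ belongs to $E_{k}(N)$. The $L$-series of $E_{k}^{\varphi}$ factors as
\[
L(E_{k}^{\varphi},s)=2\,L(s,\varphi)\,L(s-k+1,\overline{\varphi}),
\]
and the product character $\varphi\cdot\overline{\varphi}$ is trivial modulo $u$; standard arguments then show that $E_{k}^{\varphi}$ is a modular form of weight $k$ on $\Gamma_{0}(u^{2})$ with trivial nebentypus. The substitution $\tau\mapsto t\tau$ sends $M_{k}(u^{2})$ into $M_{k}(tu^{2})\subseteq M_{k}(N)$ whenever $tu^{2}\mid N$, and non-cuspidality is confirmed by checking that the constant term of $E_{k}^{\varphi,t}$ at a suitable cusp of $\Gamma_{0}(N)$ does not vanish.

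Second, I would prove linear independence of $\mathcal{A}_{k}(N)$. Suppose $\sum_{(\varphi,t)}c_{\varphi,t}E_{k}^{\varphi,t}=0$. Since the Fourier expansion of $E_{k}^{\varphi,t}$ is supported on exponents divisible by $t$, passing to the associated Dirichlet series gives $L(E_{k}^{\varphi,t},s)=t^{-s}\cdot L(E_{k}^{\varphi},s)$, and the relation becomes
\[
\sum_{(\varphi,t)\in A_{N,k}}c_{\varphi,t}\,t^{-s}\,L(s,\varphi)\,L(s-k+1,\overline{\varphi})=0.
\]
The Dirichlet series $L(s,\varphi)L(s-k+1,\overline{\varphi})$ attached to distinct primitive characters $\varphi$ are linearly independent, since their Euler factors at primes outside the conductors uniquely determine the pair $(\varphi(p),\overline{\varphi}(p)p^{k-1})$, and hence $\varphi$ itself. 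Grouping by $\varphi$ then reduces the identity, for each fixed character, to $\sum_{t}c_{\varphi,t}t^{-s}=0$, which forces $c_{\varphi,t}=0$ for every $(\varphi,t)\in A_{N,k}$.

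Third, I would match dimensions: the equality $\#A_{N,k}=\dim E_{k}(N)$ follows from the classical dimension formula for the Eisenstein subspace of $M_{k}(N)$ with even $k>2$, see e.g.\ \cite{DS05}. The main technical obstacle is the linear independence step, where one must carefully separate the contributions of different primitive characters that may share a conductor (in particular, $\varphi$ and $\overline{\varphi}$ when they differ), while simultaneously disentangling the $V_{t}$-shifts for the various $t$ with $tu^{2}\mid N$. Both are handled by working at the level of unramified Euler factors as indicated above.
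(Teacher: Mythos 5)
The paper gives no proof of this proposition beyond citing \cite[p.~129]{DS05}, and your three-step argument (membership in $E_{k}(N)$, linear independence via the factorization of the associated Dirichlet series, dimension count against the cusp-counting formula) is exactly the standard argument behind that citation, so it is consistent with the paper. The one step to tighten is linear independence: in your relation the $L$-functions $L(s,\varphi)L(s-k+1,\overline{\varphi})$ are multiplied by Dirichlet polynomials $\sum_{t}c_{\varphi,t}t^{-s}$ rather than by constants, so linear independence over $\mathbb{C}$ of the $L$-functions is not quite enough; one should first separate the characters by comparing coefficients (equivalently, Euler factors or Hecke eigenvalues) at primes $p\nmid N$, where only the $t=1$ terms contribute and where $\{\varphi(p),\overline{\varphi}(p)p^{k-1}\}$ determines $\varphi$ by the absolute-value argument you indicate, and then induct on $t$ --- which is precisely the disentangling of the $V_{t}$-shifts you already flag as the technical point.
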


\begin{proof}
    See \cite[p. 129]{DS05}.
\end{proof}

For a positive integer $N$, let $A_{N,2}$ be the set of pairs $(\varphi,t)$, where $\varphi$ is a primitive Dirichlet character modulo $u$, and $t$ is a positive integer such that $tu^{2}\mid N$. For any pair $(\varphi,t)\in A_{N,2}$, define
\begin{equation*}
    E_{2}^{\varphi,t}(\tau):=
    \begin{cases}
        E_{2}(\tau)-tE_{2}(t\tau) & \text{if}~\varphi=\mathbf{1}_{1},\\
        E_{2}^{\varphi}(t\tau) & \text{otherwise},
    \end{cases}
\end{equation*}
where
\begin{align*}
    E_{2}(\tau)&:=1-24\sum_{n=1}^{\infty}\left(\sum_{d\mid n}d\right)q^{n},\\
    E_{2}^{\varphi}(\tau)&:=\delta_{\varphi,\mathbf{1}_{1}}L(-1,\varphi)+2\sum_{n=1}^{\infty}\sigma_{1}^{\varphi}(n)q^{n},\qquad\sigma_{1}^{\varphi}(n):=\sum_{d\mid n}\varphi(d)\overline{\varphi(n/d)}d.
\end{align*}

\begin{proposition}\label{prop-meisbwttwo}
    Let $N$ be a positive integer. Then the set
    $$\mathcal{A}_{2}(N):=\{E_{2}^{\varphi,t}:(\varphi,t)\in A_{N,2}\}$$
    is a basis for $E_{2}(N)$.
\end{proposition}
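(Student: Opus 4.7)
The plan is to parallel the proof of Proposition \ref{prop-meisb}, but to handle the weight-$2$ anomaly of $E_2$ separately. The argument has three steps: (i) verify that each $E_2^{\varphi,t}$ lies in $E_2(N)$; (ii) prove linear independence of $\mathcal{A}_2(N)$; and (iii) match cardinalities by a dimension count, noting that the degenerate pair $(\varphi,t)=(\mathbf{1}_1,1)$ yields the zero function and must be excluded when forming a basis.

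For step (i), when $\varphi$ is nontrivial and primitive of conductor $u$, the series $E_2^{\varphi}(\tau)$ is a classical weight-$2$ Eisenstein series on $\Gamma_0(u^2)$, so $E_2^\varphi(t\tau)\in M_2(\Gamma_0(tu^2))\subseteq M_2(\Gamma_0(N))$ whenever $tu^2\mid N$. When $\varphi=\mathbf{1}_1$ and $t\ge 2$, I would invoke the well-known quasimodular transformation law of $E_2$ to verify that the anomalous cocycles in $E_2(\tau)$ and $tE_2(t\tau)$ cancel for every $\gamma\in\Gamma_0(t)$, yielding a genuine weight-$2$ modular form on $\Gamma_0(t)\supseteq\Gamma_0(N)$. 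Membership in the Eisenstein subspace $E_2(N)$ then follows either by a direct Petersson-orthogonality argument or by realizing each $E_2^{\varphi,t}$ as a limit (as $s\to 0$) of the real-analytic Eisenstein series that are orthogonal to cusp forms by construction.

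For step (ii), I would group $\mathcal{A}_2(N)$ by the conductor $u$ of $\varphi$. Linear independence across different characters reduces to the linear independence of Dirichlet characters applied to the divisor-sum coefficients $\sigma_1^{\varphi}(n)$. Within a fixed character, independence across different $t$ can be obtained by tracking Fourier support: the scaling $\tau\mapsto t\tau$ sends the Fourier support into the arithmetic progression $t\mathbb{Z}_{\ge 0}$, so different values of $t$ can be separated by inclusion-exclusion on the index set of Fourier coefficients.

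The main obstacle is the trivial-character subcase. One must simultaneously resolve the quasimodular anomaly (which forces the combination $E_2(\tau)-tE_2(t\tau)$ rather than $E_2(t\tau)$ alone), confirm that no pair other than $(\mathbf{1}_1,1)$ produces the zero element, and verify via a dimension count that $|A_{N,2}|-1=\dim E_2(N)$. Matching these cardinalities reduces to a standard computation relating the cusps of $\Gamma_0(N)$ to pairs $(u,t)$ with $tu^2\mid N$; this bookkeeping is what distinguishes the weight-$2$ case from the direct $k>2$ analysis recorded in \cite[p.~129]{DS05}.
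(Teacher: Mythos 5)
Your outline is correct and is essentially a reconstruction of the argument in Diamond--Shurman, Section 4.6, which is all the paper offers as its proof of this proposition. Your observation that the degenerate pair $(\varphi,t)=(\mathbf{1}_1,1)$ yields the zero function, so that it must be excluded and the dimension count reads $|A_{N,2}|-1=\dim E_{2}(N)$, is a point the paper's statement glosses over and is handled in exactly this way in the cited source.
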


\begin{proof}
    See \cite[Section 4.6]{DS05}.
\end{proof}

For a positive even integer $k$ and a positive integer $N$, denote by $\mathcal{N}_{k}(N)$ the set of all newforms in $S_{k}(N)$.

\begin{proposition}\label{prop-mcuspb}
    Let $k$ be a positive even integer and let $N$ be a positive integer. Then the set
    $$\mathcal{B}_{k}(N):=\{f(n\tau): n,L\in\mathbb{Z}_{>0},~nL\mid N~\text{and}~f\in\mathcal{N}_{k}(L)\}$$
    is a basis for $S_{k}(N)$.
\end{proposition}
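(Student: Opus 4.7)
The plan is to invoke Atkin--Lehner theory directly. Take $\mathcal{N}_k(L)$ to be the set of normalized newforms of weight $k$ and level $L$, which by the multiplicity-one theorem forms a basis of the newform subspace $S_k^{\mathrm{new}}(L) \subseteq S_k(L)$. The proposition then amounts to the Atkin--Lehner decomposition
\[
S_k(N) \;=\; \bigoplus_{L \mid N} \;\bigoplus_{n \mid N/L} \iota_n\bigl(S_k^{\mathrm{new}}(L)\bigr),
\]
where $\iota_n$ denotes the level-raising map $(\iota_n f)(\tau) := f(n\tau)$.

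For spanning, I would argue by strong induction on $N$. The base case $N = 1$ is immediate since $S_k(1) = S_k^{\mathrm{new}}(1)$. For the inductive step, use the orthogonal decomposition $S_k(N) = S_k^{\mathrm{new}}(N) \oplus S_k^{\mathrm{old}}(N)$ with respect to the Petersson inner product, where $S_k^{\mathrm{old}}(N)$ is by definition spanned by $\iota_d(g)$ with $d > 1$ and $g \in S_k(N/d)$. The newform part is spanned by $\mathcal{N}_k(N)$ (these are the $n = 1$, $L = N$ elements of $\mathcal{B}_k(N)$), and by induction every $g \in S_k(N/d)$ is a linear combination of $\iota_m(f)$ with $f \in \mathcal{N}_k(L')$ and $mL' \mid N/d$; applying $\iota_d$ produces $\iota_{md}(f)$ with $mdL' \mid N$, which lies in $\mathcal{B}_k(N)$.

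For linear independence, suppose that
\[
\sum_{L \mid N} \sum_{f \in \mathcal{N}_k(L)} \sum_{n \mid N/L} c_{L,f,n}\, f(n\tau) = 0.
\]
For every prime $p \nmid N$, the Hecke operator $T_p$ acts on $S_k(N)$ and commutes with each $\iota_n$, so each $f(n\tau)$ is a $T_p$-eigenform with eigenvalue $a_f(p)$. By strong multiplicity one, distinct newforms (possibly at different levels dividing $N$) have distinct systems of eigenvalues $\{a_f(p)\}_{p \nmid N}$; simultaneously diagonalizing the $T_p$ therefore forces the inner sum $\sum_{n \mid N/L} c_{L,f,n}\, f(n\tau)$ to vanish for each fixed pair $(L,f)$. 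Finally, since the normalized newform satisfies $f(n\tau) = q^n + O(q^{n+1})$, ordering the divisors $n$ by size shows $c_{L,f,n} = 0$ for every $n$.

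The main obstacle is the strong multiplicity one statement across different levels dividing $N$ --- the assertion that a normalized newform is determined uniquely by its Hecke eigenvalues at primes not dividing its level. This is the core input from Atkin--Lehner theory and would be invoked as a black box; once granted, the rest of the argument is mechanical and uses only Fourier expansions together with the commutation of Hecke operators with the level-raising maps $\iota_n$.
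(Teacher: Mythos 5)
Your proposal is the standard Atkin--Lehner argument; the paper does not write out a proof at all but simply cites Diamond--Shurman (pp.~197--198), and what you have sketched is precisely the argument given there, so in substance you are reproducing the intended proof rather than taking a different route. The linear-independence half is correct: isolating each newform's eigenpacket via the $T_p$ for $p\nmid N$ (strong multiplicity one across all levels dividing $N$ being the black-box input) and then killing the coefficients $c_{L,f,n}$ by looking at the lowest-order term $q^{n}$ of $f(n\tau)$ is exactly how one proceeds.

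There is one slip in the spanning step: you define $S_k^{\mathrm{old}}(N)$ as the span of $\iota_d(g)$ with $d>1$ and $g\in S_k(N/d)$. That is not the old subspace. The correct definition also includes the forms $g(\tau)=\iota_1(g)$ for $g\in S_k(M)$ with $M\mid N$, $M\neq N$ (equivalently, the images of both degeneracy maps $\iota_1$ and $\iota_p$ from $S_k(N/p)$ for each prime $p\mid N$). With your definition the claimed decomposition $S_k(N)=S_k^{\mathrm{new}}(N)\oplus S_k^{\mathrm{old}}(N)$ is false already for $S_{12}(2)$: that space is spanned by $\Delta(\tau)$ and $\Delta(2\tau)$, has no newforms, and your old space would only contain $\mathbb{C}\cdot\Delta(2\tau)$, so the induction as written does not span. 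The repair is immediate --- allow $d=1$ with $M$ a proper divisor of $N$, and note that by the inductive hypothesis such a $g$ is already a combination of $f(m\tau)$ with $mL'\mid M\mid N$, hence lies in the span of $\mathcal{B}_k(N)$ --- but as stated the spanning argument has a gap.
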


\begin{proof}
    See \cite[pp. 197--198]{DS05}.
\end{proof}

In the following proposition, we recall the $\ell$-adic Galois representation attached to a normalized Hecke eigenform.

\begin{proposition}\label{prop-cuspgalrep}
    Let $k\geq 2$ be an even integer, and let $f=\sum_{n=1}^{\infty}a_{f}(n)q^{n}\in S_{k}(N)$ be a normalized eigenform of the Hecke operator $T_{p}$ for each prime $p\nmid N$. Moreover, let $K$ be a number field such that $a_{f}(n)\in K$ for every positive integer $n$, let $\ell$ be a prime with $\ell\nmid N$, and let $\lambda$ be a finite place of $K$ lying above $\ell$. Then there exists a representation
    $$\rho_{f,\lambda}:G_{\mathbb{Q}}\rightarrow\mathrm{GL}_{2}(K_{\lambda})$$
    such that
    \begin{enumerate}
        \item[\textnormal{(i)}] $\rho_{f,\lambda}$ is irreducible,
        \item[\textnormal{(ii)}] $\rho_{f,\lambda}$ is odd,
        \item[\textnormal{(iii)}] for every prime $p\nmid N\ell$, the representation $\rho_{f,\lambda}$ is unramified at $p$ and satisfies
        $$\mathrm{tr}(\rho_{f,\lambda}(\mathrm{Frob}_{p}))=a_{f}(p)\quad\text{and}\quad\det(\rho_{f,\lambda}(\mathrm{Frob}_{p}))=p^{k-1}.$$
    \end{enumerate}
\end{proposition}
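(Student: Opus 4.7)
The plan is to construct $\rho_{f,\lambda}$ via Deligne's geometric realization in the $\ell$-adic étale cohomology of modular curves. Let $\pi\colon \mathcal{E}\to Y$ be the universal elliptic curve over a modular curve $Y$ with $\Gamma_{0}(N)$-level structure (introducing auxiliary level if needed to rigidify the moduli problem), and set $\mathcal{F}_k:=\operatorname{Sym}^{k-2}R^1\pi_*\mathbb{Q}_\ell$ for $k>2$, with $\mathcal{F}_2=\mathbb{Q}_\ell$. The parabolic étale cohomology group $H^1_{\mathrm{par}}(Y_{\overline{\mathbb{Q}}},\mathcal{F}_k)$ carries commuting continuous actions of $G_{\mathbb{Q}}$ and of the Hecke algebra $\mathbb{T}$, and the Eichler--Shimura isomorphism identifies its complexification with $S_k(N)\oplus\overline{S_k(N)}$ as Hecke modules.

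Extending scalars to $K_{\lambda}$ and localizing at the maximal ideal $\mathfrak{m}_f\subset\mathbb{T}\otimes K_\lambda$ determined by the eigenvalue system $\{a_f(p)\}_{p\nmid N}$, one isolates a $2$-dimensional $K_\lambda$-subspace that is stable under $G_{\mathbb{Q}}$; I take $\rho_{f,\lambda}$ to be the associated representation. The three required properties are then verified as follows. For (iii), unramifiedness at $p\nmid N\ell$ follows from smooth-proper base change, since $Y$ and $\mathcal{E}$ have good reduction at $p$; the trace identity $\tr \rho_{f,\lambda}(\Frob_p)=a_f(p)$ is the Eichler--Shimura congruence $T_p\equiv \Frob_p+p^{k-1}\Frob_p^{-1}$ on the special fibre combined with $T_p f=a_f(p)f$; and the determinant identity $\det \rho_{f,\lambda}(\Frob_p)=p^{k-1}$ comes from Poincaré duality on $\mathcal{F}_k$, which induces a perfect pairing into the $(k-1)$-fold Tate twist. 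Property (ii) follows because complex conjugation acts on the Betti realization of $\mathcal{F}_k$ by an involution with eigenvalues $+1$ and $-1$ on each $2$-dimensional $f$-eigenspace, so $\det\rho_{f,\lambda}(c)=-1$. Finally, property (i) is Ribet's irreducibility theorem: a reducible $\rho_{f,\lambda}$ would force each $a_f(p)$ to be a sum of two Dirichlet character values, contradicting the hypothesis that $f$ is cuspidal rather than Eisenstein.

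The main technical obstacles are the Eichler--Shimura congruence relation in weight $k>2$, which demands a careful analysis of the reduction modulo $p$ of the Hecke correspondence $T_p$ together with the interplay between Frobenius and Verschiebung on the ordinary locus, and Ribet's irreducibility theorem, which rests on a detailed study of congruences between cuspidal and Eisenstein eigensystems. In the present paper Proposition \ref{prop-cuspgalrep} is invoked only as a black-box input, so its proof may simply be cited from Deligne's original construction together with standard references such as \cite{DS05}.
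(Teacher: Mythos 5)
Your proposal is correct and ultimately takes the same route as the paper, which simply cites Eichler, Shimura, and Igusa for $k=2$ and Deligne for general $k\geq 2$; you additionally sketch the standard construction (étale cohomology of $\operatorname{Sym}^{k-2}R^1\pi_*\mathbb{Q}_\ell$, the Eichler--Shimura congruence, Poincaré duality for the determinant, Ribet for irreducibility) before concluding that a citation suffices. The only slightly loose point is the irreducibility step: the actual contradiction in Ribet's argument comes from growth estimates on $a_f(p)$ against sums of the form $\epsilon_1(p)p^{a}+\epsilon_2(p)p^{b}$ with $a+b=k-1$, not merely from "$f$ is cuspidal rather than Eisenstein," but your attribution is correct and this does not affect the result.
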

\begin{proof}
    In the case $k=2$, this proposition follows from the work of Eichler \cite{E54}, Shimura \cite{S58}, and Igusa \cite{I59}. Deligne \cite{D71} later generalized their construction to cusp forms of weight $k\geq 2$.    
\end{proof}

Let
\begin{equation*}
    M(N):=\bigoplus_{k\in\mathbb{Z}}M_{k}(N),\quad E(N):=\bigoplus_{k\in\mathbb{Z}}E_{k}(N),\quad S(N):=\bigoplus_{k\in\mathbb{Z}}S_{k}(N).
\end{equation*}
Then
$$M(N)=E(N)\oplus S(N).$$

We now consider a wider class of functions. An \textit{almost-holomorphic modular form} of weight $k$ for $\Gamma_{0}(N)$ is a smooth function $F$ on $\mathbb{H}$ satisfying the following two conditions:
\begin{enumerate}
    \item[(i)] $F\left(\frac{a\tau+b}{c\tau+d}\right)=(c\tau+d)^{k}F(\tau)$ for every $\left(\begin{smallmatrix}
        a & b \\ c & d
    \end{smallmatrix}\right)\in\Gamma_{0}(N)$,

    \item[(ii)] $$F(\tau)=\sum_{i=0}^{m}f_{i}(\tau)Y^{-i}$$
    for some holomorphic functions $f_{0},f_{1},\ldots,f_{m}$ on $\mathbb{H}$ with moderate growth at each cusp of $\Gamma_{0}(N)$.
\end{enumerate}
We say that a function $f$ is a \textit{quasimodular form} of weight $k$ for $\Gamma_{0}(N)$ if there is an almost-holomorphic modular form $F$ of weight $k$ for $\Gamma_{0}(N)$ whose constant term (when $F$ is expressed as a polynomial in $1/Y$) is $f$. Let us denote the space of quasimodular forms of weight $k$ for $\Gamma_{0}(N)$ by $\widetilde{M}_{k}(N)$, and we set
$$\widetilde{M}(N):=\bigoplus_{k\in\mathbb{Z}}\widetilde{M}_{k}(N).$$
Hereafter, given any quasimodular form $f$ on $\Gamma_{0}(N)$ and any nonnegative integer $n$, we shall denote by $a_{f}(n)$ the coefficient of $q^{n}$ in the Fourier expansion of $f$. Hence,
$$f(\tau)=\sum_{n=0}^{\infty}a_{f}(n)q^{n}.$$

Define
$$D:=\frac{1}{2\pi i}\frac{d}{d\tau}=q\frac{d}{dq}.$$
The following result describes the structure of quasimodular forms.

\begin{proposition}\label{prop-qmfstr}
    Let $N$ be a positive integer. Then we have
    \begin{enumerate}
        \item[\textnormal{(i)}] $\widetilde{M}(N)$ is closed under the operator $D$.
        
        \item[\textnormal{(ii)}] $\widetilde{M}(N)=M(N)\otimes\mathbb{C}[E_{2}]$, i.e., every quasimodular form for $\Gamma_{0}(N)$ can be written uniquely as a polynomial in $E_{2}$ with coefficients in $M(N)$.
        
        \item[\textnormal{(iii)}] For an even positive integer $k$, we have
        $$\widetilde{M}_{k}(N)=\bigoplus_{i=0}^{k/2-1}D^{i}M_{k-2i}(N)\oplus\mathbb{C}\cdot D^{k/2-1}E_{2}.$$
    \end{enumerate}
\end{proposition}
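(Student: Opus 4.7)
The plan is to work throughout with the bijection between quasimodular forms and almost-holomorphic modular forms given by the constant-term map: every $f\in\widetilde{M}_k(N)$ lifts uniquely to an almost-holomorphic modular form $F=\sum_{i=0}^{s}f_iY^{-i}$ of weight $k$ with $f_0=f$, where $Y=\mathrm{Im}(\tau)$. All three parts will be obtained at this richer level, where the modular transformation law is rigid enough to control both the depth filtration and the multiplicative structure. For part (i), I would use the Maass--Shimura raising operator $\partial_k F:=\frac{1}{2\pi i}\frac{\partial F}{\partial\tau}-\frac{k}{4\pi Y}F$, which sends almost-holomorphic modular forms of weight $k$ to those of weight $k+2$. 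A short calculation using $\partial Y/\partial\tau=1/(2i)$ shows that the coefficient of $Y^{0}$ in $\partial_k F$ is exactly $Df_0=Df$, so $Df\in\widetilde{M}_{k+2}(N)$ and closure under $D$ follows.

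For part (ii), the inclusion $M(N)\otimes\mathbb{C}[E_2]\subset\widetilde{M}(N)$ is immediate from the classical fact that $E_2^{\ast}(\tau):=E_2(\tau)-\tfrac{3}{\pi Y}$ is an almost-holomorphic modular form of weight $2$ with constant term $E_2$. For the reverse inclusion I would induct on the depth $s$ of the lift $F=\sum_{i=0}^{s}f_iY^{-i}$. Substituting $Y(\gamma\tau)=Y/|c\tau+d|^2$ and $\bar\tau=\tau-2iY$ into $F(\gamma\tau)=(c\tau+d)^kF(\tau)$ and comparing coefficients of $Y^{-s}$, only the term $i=s$ contributes, giving $f_s(\gamma\tau)=(c\tau+d)^{k-2s}f_s(\tau)$ and hence $f_s\in M_{k-2s}(N)$. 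Subtracting $c_sf_s(E_2^{\ast})^s$ with $c_s=(-\pi/3)^s$ produces an almost-holomorphic modular form of weight $k$ of strictly smaller depth; by induction, $F$ is a polynomial in $E_2^{\ast}$ with coefficients in $M(N)$, and taking constant terms gives $f\in M(N)\otimes\mathbb{C}[E_2]$. Uniqueness of this representation follows from the same depth argument applied to any hypothetical nontrivial relation.

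For part (iii), starting from the unique expression $f=\sum_{j=0}^{k/2}g_jE_2^j$ with $g_j\in M_{k-2j}(N)$ provided by (ii), I would translate it into the candidate spanning set $\bigcup_{i=0}^{k/2-1}D^iM_{k-2i}(N)\cup\{D^{k/2-1}E_2\}$ via the Ramanujan--Serre derivative. For $g\in M_l(N)$ with $l>0$, one has $Dg=\theta_l(g)+\tfrac{l}{12}E_2g$ with $\theta_l(g)\in M_{l+2}(N)$, and Ramanujan's identity $DE_2=(E_2^2-E_4)/12$ iterates to $D^nE_2=(n!/12^n)E_2^{n+1}+(\text{lower }E_2\text{-degree})$. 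These identities express each $D^ig$ (for $g\in M_{k-2i}(N)$, $0\le i\le k/2-1$) as a nonzero multiple of $E_2^ig$ plus lower $E_2$-degree terms, and $D^{k/2-1}E_2$ as a nonzero multiple of $E_2^{k/2}$ plus lower $E_2$-degree terms. The resulting change-of-basis matrix relative to $\{E_2^jg_j\}$ is upper-triangular in the $E_2$-degree filtration with nonzero diagonal, hence invertible, and the direct-sum decomposition of (iii) then follows from uniqueness in (ii). The extra summand $\mathbb{C}\cdot D^{k/2-1}E_2$ is necessary because $D$ annihilates $M_0(N)=\mathbb{C}$, so the $j=k/2$ piece of (ii) cannot be reached by $D^iM_{k-2i}(N)$ alone. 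The main obstacle I anticipate is the depth-extraction step in (ii): verifying that the $Y^{-s}$-coefficient of the transformation law isolates $f_s$ cleanly requires carefully expanding $(c\bar\tau+d)^s=((c\tau+d)-2icY)^s$ and checking that, at top depth, only the $Y^{0}$-term of the binomial survives. Once this is in place, the remaining work reduces to standard identities among $E_2$, $E_4$, $E_6$ and linear-algebraic bookkeeping.
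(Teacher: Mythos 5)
The paper offers no proof of this proposition at all --- it simply cites \cite{KZ95} and \cite{Z08} --- so the only meaningful comparison is with those references, and your argument is essentially the standard one found there (cf.\ Zagier's Proposition~20 and its proof): lift to almost-holomorphic modular forms, extract the top depth coefficient $f_s$ as an honest element of $M_{k-2s}(N)$, peel it off with powers of $E_2^{*}=E_2-\tfrac{3}{\pi Y}$, and then convert the $E_2$-grading into the $D$-grading using the Serre derivative and Ramanujan's identity. The computational details you flag as delicate all check out: only the $i=s$, $r=0$ term of $(c\bar\tau+d)^{s}=((c\tau+d)-2icY)^{s}$ contributes to the $Y^{-s}$-coefficient, the normalization $c_s=(-\pi/3)^{s}$ is right, the coefficient of $D^{r}F$ at $Y^{0}$ under the Maass--Shimura operator is $Df_0$, and the leading coefficients $\tfrac{l(l+1)\cdots(l+i-1)}{12^{i}}$ (nonzero precisely for $i\le k/2-1$, which is why the extra summand $\mathbb{C}\cdot D^{k/2-1}E_2$ is needed) and $\tfrac{n!}{12^{n}}$ make the change of basis in (iii) triangular with invertible diagonal.

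The one step you assert rather than prove is the uniqueness invoked twice: uniqueness of the almost-holomorphic lift and, equivalently, the impossibility of a nontrivial relation $\sum_j g_j E_2^{j}=0$ with $g_j\in M(N)$. Your ``same depth argument'' does not quite deliver this: comparing top coefficients of $\sum_j g_j (E_2^{*})^{j}$ only shows that this almost-holomorphic form has vanishing constant term while its $Y^{-s}$-coefficient is $(-3/\pi)^{s}g_s$, and ruling out such a form is exactly the injectivity of the constant-term map --- which is what is in question. The standard repair uses the failure of modularity of $E_2$ directly: applying $\gamma\in\Gamma_0(N)$ to $\sum_j g_j(\tau)E_2(\tau)^{j}=0$ and using $E_2(\gamma\tau)=(c\tau+d)^{2}E_2(\tau)-\tfrac{6ic(c\tau+d)}{\pi}$ shows that the polynomial $P(X)=\sum_j g_j(\tau)X^{j}$ vanishes at $X=E_2(\tau)-\tfrac{6ic}{\pi(c\tau+d)}$ for every $\gamma$; since $\tfrac{c}{c\tau+d}$ takes infinitely many values as $\gamma$ ranges over $\Gamma_0(N)$ (e.g.\ over the matrices with first row $(1,0)$ and lower-left entry $Nm$), $P$ has infinitely many roots and hence all $g_j$ vanish. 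With that lemma supplied, your proof is complete and correct.
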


\begin{proof}
    See \cite[p. 3]{KZ95} and \cite[p. 59]{Z08}.
\end{proof}

By the definition of a quasimodular form and Proposition \ref{prop-qmfstr} (ii),
\begin{equation*}
    \widetilde{M}_{k}(N)=
    \begin{cases}
        0 & \text{if}~k~\text{is negative or odd},\\
        \mathbb{C} & \text{if}~k=0.
    \end{cases}
\end{equation*}
Given a nonnegative integer $k$, put
\begin{align}
    \widetilde{E}_{2k}(N)&:=
    \begin{cases}
        \mathbb{C}\cdot 1 & \text{if}~k=0,\\
        \displaystyle\left(\bigoplus_{i=0}^{k-1}D^{i}E_{2k-2i}(N)\right)\oplus\mathbb{C}\cdot D^{k-1}E_{2} & \text{if}~k>0,
    \end{cases}\label{eq-qmfeis}\\
    \widetilde{S}_{2k}(N)&:=
    \begin{cases}
        0 & \text{if}~k=0,\\
        \displaystyle\bigoplus_{i=0}^{k-1}D^{i}S_{2k-2i}(N) & \text{if}~k>0.
    \end{cases}\nonumber
\end{align}
For each positive integer $k$, we obtain from Propositions \ref{prop-meisb}, \ref{prop-meisbwttwo} and \ref{prop-mcuspb} the following descriptions of $\widetilde{E}_{2k}(N)$ and $\widetilde{S}_{2k}(N)$:
\begin{align}
    \widetilde{E}_{2k}(N)&=\left(\bigoplus_{i=1}^{k}\bigoplus_{(\varphi,t)\in A_{N,2i}}\mathbb{C}\cdot D^{k-i}E_{2i}^{\varphi,t}\right)\oplus\mathbb{C}\cdot D^{k-1}E_{2}\label{eq-qmfeisbasis}
\end{align}
and
\begin{align*}
    \widetilde{S}_{2k}&(N)\\
    &=\left(\bigoplus_{i=1}^{k}\bigoplus_{L\mid N}\bigoplus_{f\in\mathcal{N}_{2i}(L)}\mathbb{C}\cdot (D^{k-i}f)(\tau)\right)\oplus\left(\bigoplus_{i=1}^{k}\bigoplus_{L\mid N}\bigoplus_{f\in\mathcal{N}_{2i}(L)}\bigoplus_{1<n\mid(N/L)}\mathbb{C}\cdot D^{k-i}(f(n\tau))\right)\\
    &=\widetilde{S}_{2k}^{\mathrm{new}}(N)\oplus\widetilde{S}_{2k}^{\mathrm{old}}(N),
\end{align*}
where
\begin{align}
    \widetilde{S}_{2k}^{\mathrm{new}}(N)&:=\bigoplus_{i=1}^{k}\bigoplus_{L\mid N}\bigoplus_{f\in\mathcal{N}_{2i}(L)}\mathbb{C}\cdot (D^{k-i}f)(\tau),\label{eq-qmfnew}\\
    \widetilde{S}_{2k}^{\mathrm{old}}(N)&:=\bigoplus_{i=1}^{k}\bigoplus_{L\mid N}\bigoplus_{f\in\mathcal{N}_{2i}(L)}\bigoplus_{1<n\mid(N/L)}\mathbb{C}\cdot D^{k-i}(f(n\tau)).\nonumber
\end{align}
Furthermore, define
\begin{align}
    \widetilde{E}(N)&:=\bigoplus_{k\in\mathbb{Z}}\widetilde{E}_{k}(N)=\bigoplus_{k\in\mathbb{Z}_{\geq 0}}\widetilde{E}_{2k}(N),\label{eq-qmfeistotal}\\
    \widetilde{S}(N)&:=\bigoplus_{k\in\mathbb{Z}}\widetilde{S}_{k}(N)=\bigoplus_{k\in\mathbb{Z}_{>0}}\widetilde{S}_{2k}(N),\nonumber\\
    \widetilde{S}^{\mathrm{new}}(N)&:=\bigoplus_{k\in\mathbb{Z}_{>0}}\widetilde{S}_{2k}^{\mathrm{new}}(N),\label{eq-qmfnewtotal}\\
    \widetilde{S}^{\mathrm{old}}(N)&:=\bigoplus_{k\in\mathbb{Z}_{>0}}\widetilde{S}_{2k}^{\mathrm{old}}(N).\nonumber
\end{align}
Then
$$\widetilde{S}(N)=\widetilde{S}^{\mathrm{new}}(N)\oplus\widetilde{S}^{\mathrm{old}}(N).$$
We call elements of $\widetilde{E}(N)$ (resp. $\widetilde{S}(N)$) \textit{quasimodular Eisenstein series} (resp. \textit{quasimodular cusp forms}) for $\Gamma_{0}(N)$. Moreover, we call elements of $\widetilde{S}^{\mathrm{new}}(N)$ (resp. $\widetilde{S}^{\mathrm{old}}(N)$) \textit{quasimodular newforms} (resp. \textit{quasimodular oldforms}) for $\Gamma_{0}(N)$. Using \eqref{eq-moddecomp} and Proposition \ref{prop-qmfstr} (iii), we see that
$$\widetilde{M}(N)=\widetilde{E}(N)\oplus\widetilde{S}(N)=\widetilde{E}(N)\oplus\widetilde{S}^{\mathrm{new}}(N)\oplus\widetilde{S}^{\mathrm{old}}(N).$$

\section{Proof of Theorem \ref{thm1}}\label{sec-proof1}

In this section, we prove Theorem \ref{thm1}. To this end, we first establish several lemmas.

\begin{lemma}\label{lem-qmfalgbasis}
    Let $N$ be a positive integer. For each $k\geq 0$, the space $\widetilde{M}_{2k}(N)$ admits a basis consisting of forms whose Fourier coefficients lie in $\mathbb{Q}(\zeta_{N})$.
\end{lemma}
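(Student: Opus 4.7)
The plan is to reduce the claim to a basis statement for ordinary modular forms via the structural decomposition in Proposition \ref{prop-qmfstr} (iii), which in weight $2k$ reads
\[ \widetilde{M}_{2k}(N) = \bigoplus_{i=0}^{k-1} D^{i} M_{2k-2i}(N) \oplus \mathbb{C}\cdot D^{k-1}E_{2}. \]
The operator $D$ acts on $q$-expansions by $\sum a_{n}q^{n}\mapsto \sum n a_{n}q^{n}$, so it preserves $\mathbb{Q}(\zeta_{N})$-rationality of Fourier coefficients. Since $E_{2}$ has integer Fourier coefficients, $D^{k-1}E_{2}$ is automatically $\mathbb{Q}(\zeta_{N})$-rational. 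Hence it suffices to produce, for each $j\in\{1,\dots,k\}$, a basis of $M_{2j}(N)$ whose Fourier coefficients lie in $\mathbb{Q}(\zeta_{N})$. The case $k=0$ is trivial since $\widetilde{M}_{0}(N)=\mathbb{C}$.

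For each $j\ge 1$, I would invoke the decomposition \eqref{eq-moddecomp}, $M_{2j}(N)=E_{2j}(N)\oplus S_{2j}(N)$, and treat the two summands separately. For $E_{2j}(N)$, Propositions \ref{prop-meisb} and \ref{prop-meisbwttwo} furnish the explicit basis $\mathcal{A}_{2j}(N)=\{E_{2j}^{\varphi,t}:(\varphi,t)\in A_{N,2j}\}$. For every pair $(\varphi,t)\in A_{N,2j}$, the character $\varphi$ is primitive modulo some $u$ with $u\mid N$, so its values lie in $\mathbb{Q}(\zeta_{u})\subseteq\mathbb{Q}(\zeta_{N})$. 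The non-constant coefficients $2\sigma_{2j-1}^{\varphi}(n)=2\sum_{d\mid n}\varphi(d)\overline{\varphi(n/d)}d^{2j-1}$ therefore lie in $\mathbb{Q}(\zeta_{N})$, and the constant term $\delta_{\varphi,\mathbf{1}_{1}}L(1-2j,\varphi)$ is a generalized Bernoulli number, which also lies in $\mathbb{Q}(\zeta_{u})$. The dilation $\tau\mapsto t\tau$ merely inserts zeros between Fourier coefficients and so preserves rationality.

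For $S_{2j}(N)$, I would prove the stronger statement that it admits a basis over $\mathbb{Q}$, and hence over $\mathbb{Q}(\zeta_{N})$. Starting from the basis of Proposition \ref{prop-mcuspb}, group the newforms $f\in\mathcal{N}_{2j}(L)$ (for each $L\mid N$) into Galois orbits: for $\sigma\in\mathrm{Gal}(\overline{\mathbb{Q}}/\mathbb{Q})$, the conjugate $f^{\sigma}:=\sum_{n}\sigma(a_{f}(n))q^{n}$ is again a newform in $\mathcal{N}_{2j}(L)$, and the orbit of $f$ has size $r:=[K_{f}:\mathbb{Q}]$ where $K_{f}$ is the Hecke field of $f$. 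For each $\alpha\in K_{f}$ and each divisor $n$ of $N/L$, the form
\[ g_{\alpha,n}(\tau):=\sum_{\sigma\colon K_{f}\hookrightarrow\overline{\mathbb{Q}}}\sigma(\alpha)\,f^{\sigma}(n\tau) \]
has $m$-th Fourier coefficient $\mathrm{Tr}_{K_{f}/\mathbb{Q}}(\alpha\, a_{f}(m/n))$ (zero if $n\nmid m$), which is rational. Letting $\alpha$ range over a $\mathbb{Q}$-basis of $K_{f}$ yields $r$ forms that are $\mathbb{C}$-linearly independent by linear independence of the embeddings $K_{f}\hookrightarrow\overline{\mathbb{Q}}$ (i.e., by Dedekind's independence of characters), so their $\mathbb{C}$-span agrees with the $\mathbb{C}$-span of $\{f^{\sigma}(n\tau):\sigma\}$. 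Collecting these over all Galois orbits, all $L\mid N$, and all $n\mid N/L$ produces a $\mathbb{Q}$-basis of $S_{2j}(N)$.

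Combining the Eisenstein and cuspidal bases gives a $\mathbb{Q}(\zeta_{N})$-rational basis of $M_{2j}(N)$, and applying $D^{k-j}$ to each such basis and adjoining $D^{k-1}E_{2}$ yields the desired basis of $\widetilde{M}_{2k}(N)$. No step presents a real obstacle; the only nontrivial input is the rationality of $S_{2j}(N)$ over $\mathbb{Q}$, for which the trace construction above supplies a clean, self-contained argument.
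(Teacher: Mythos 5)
Your proof follows the same skeleton as the paper's: reduce to the spaces $M_{2j}(N)$ via Proposition \ref{prop-qmfstr}(iii) together with the fact that $D$ preserves $\mathbb{Q}(\zeta_{N})$-rationality of $q$-expansions, split $M_{2j}(N)=E_{2j}(N)\oplus S_{2j}(N)$, and handle the Eisenstein part with the explicit bases of Propositions \ref{prop-meisb} and \ref{prop-meisbwttwo}. The one genuine difference is the cuspidal part: the paper simply cites Shimura's theorem that $S_{2j}(N)$ admits a basis with rational Fourier coefficients, whereas you reconstruct such a basis by averaging Galois orbits of newforms against a $\mathbb{Q}$-basis of the Hecke field. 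Your construction is correct---the forms $g_{\alpha,n}$ have rational coefficients, and nondegeneracy of the trace form (equivalently, linear independence of the embeddings) shows that for each orbit and each $n$ they span the same $\mathbb{C}$-subspace as $\{f^{\sigma}(n\tau)\}_{\sigma}$, so collecting over orbits, levels $L\mid N$, and $n\mid N/L$ gives a $\mathbb{C}$-basis of $S_{2j}(N)$ with rational coefficients. However, the argument is not as self-contained as you claim: the input that $f^{\sigma}$ is again a newform in $\mathcal{N}_{2j}(L)$ for every $\sigma\in\mathrm{Gal}(\overline{\mathbb{Q}}/\mathbb{Q})$ is itself a nontrivial theorem, and its standard proofs go through precisely the rationality of the cuspidal space (or the $q$-expansion principle), i.e., essentially the result the paper quotes. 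So your route trades a citation for an explicit construction resting on an input of comparable depth; both arguments are valid.
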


\begin{proof}
For $k=0$, we have $\widetilde{M}_{0}(N)=\mathbb{C}\cdot 1$, and the constant function $1$ has rational Fourier coefficients. Thus, the claim holds for $k=0$.

Now assume that $k\geq 1$. For $i\in\{1,2,\ldots,k\}$, we have 
$$M_{2i}(N)=E_{2i}(N)\oplus S_{2i}(N).$$ 
By Propositions~\ref{prop-meisb} and \ref{prop-meisbwttwo}, the space $E_{2i}(N)$
admits a basis consisting of forms with Fourier coefficients in $\mathbb{Q}(\zeta_{N})$. On the other hand, by \cite[Theorem 3.52]{S71}, the space $S_{2i}(N)$ admits a basis consisting of forms whose Fourier coefficients are rational. Consequently, $M_{2i}(N)$ admits a basis consisting of forms with Fourier coefficients in $\mathbb{Q}(\zeta_{N})$. 

By Proposition \ref{prop-qmfstr}(iii),
\begin{equation*}
    \widetilde{M}_{2k}(N)=\left(\bigoplus_{i=1}^{k} D^{k-i} M_{2i}(N)\right)\oplus \mathbb{C}\cdot D^{k-1}E_{2}.
\end{equation*}
Since the operator $D=q\frac{d}{dq}$ preserves $\mathbb{Q}(\zeta_{N})[\![q]\!]$, we conclude that the space $\widetilde{M}_{2k}(N)$ admits a basis consisting of forms whose Fourier coefficients lie in $\mathbb{Q}(\zeta_{N})$.
\end{proof}

Recall that we defined in Section \ref{sec-intro} the set $\widetilde{\Omega}_{N}$ by
$$\widetilde{\Omega}_{N}:=\left\{f=\sum_{n=0}^{\infty}a_{f}(n)q^{n}\in\widetilde{M}(N):a_{f}(p)=0~\text{for every prime}~p\nmid N\right\}.$$
The following lemma shows that every element of $\widetilde{\Omega}_{N}$ is a $\mathbb{C}$-linear combination of elements in the same set whose Fourier coefficients lie in $\mathbb{Q}(\zeta_{N})$.

\begin{lemma}\label{lem-reduction}
    Let $N$ be a positive integer and let $f(\tau)=\sum_{n=0}^{\infty}a_{f}(n)q^{n}\in\widetilde{M}(N)$.
    Assume that $a_{f}(p)=0$ for all primes $p\nmid N$.
    Then there exist $f_{1},\dots,f_{d}\in \widetilde{M}(N)\cap \mathbb{Q}(\zeta_{N})[\![q]\!]$ and $\beta_{1},\ldots,\beta_{d}\in\mathbb{C}$ such that
    $$f=\sum_{j=1}^{d}\beta_{j}f_{j},$$
    and, for each $j\in\{1,\ldots,d\}$ and each prime $p\nmid N$, the $p$-th Fourier coefficient of $f_{j}$ is zero.
\end{lemma}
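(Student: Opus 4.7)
The plan is to reduce the statement to a linear-algebra descent from $\mathbb{C}$ to $\mathbb{Q}(\zeta_N)$. Since $f\in\widetilde{M}(N)=\bigoplus_{k\ge 0}\widetilde{M}_{2k}(N)$, only finitely many weight components of $f$ are nonzero, so $f$ lies in the finite-dimensional subspace
\[ V:=\bigoplus_{k=0}^{K}\widetilde{M}_{2k}(N) \]
for some $K$. By Lemma~\ref{lem-qmfalgbasis}, each $\widetilde{M}_{2k}(N)$ admits a basis consisting of forms with Fourier coefficients in $\mathbb{Q}(\zeta_N)$. Concatenating these gives a $\mathbb{C}$-basis $g_{1},\ldots,g_{r}$ of $V$ with every $g_{i}\in\mathbb{Q}(\zeta_{N})[\![q]\!]$.

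Write $f=\sum_{i=1}^{r}\alpha_{i}g_{i}$ with $\alpha_{i}\in\mathbb{C}$. For each prime $p\nmid N$, the hypothesis $a_{f}(p)=0$ translates into the linear equation
\[ \sum_{i=1}^{r}a_{g_{i}}(p)\,\alpha_{i}=0, \]
whose coefficients $a_{g_{i}}(p)$ all lie in $\mathbb{Q}(\zeta_{N})$. Hence the vector $(\alpha_{1},\ldots,\alpha_{r})$ belongs to
\[ W:=\Bigl\{(x_{1},\ldots,x_{r})\in\mathbb{C}^{r}:\sum_{i}a_{g_{i}}(p)\,x_{i}=0\text{ for all primes }p\nmid N\Bigr\}, \]
a $\mathbb{C}$-subspace of $\mathbb{C}^{r}$ cut out by (possibly infinitely many) $\mathbb{Q}(\zeta_{N})$-linear equations.

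Next, I invoke the standard descent fact that a linear system with coefficients in a field $K$ has solution space that extends scalars freely: if $W_{\mathbb{Q}(\zeta_{N})}\subset\mathbb{Q}(\zeta_{N})^{r}$ denotes the analogous solution set over $\mathbb{Q}(\zeta_{N})$, then $W=W_{\mathbb{Q}(\zeta_{N})}\otimes_{\mathbb{Q}(\zeta_{N})}\mathbb{C}$. (This is immediate by reducing to a finite subsystem of equations and applying basic linear algebra, and infiniteness of the system is harmless since $W$ is already finite-dimensional.) Choose a $\mathbb{Q}(\zeta_{N})$-basis $v_{1},\ldots,v_{d}$ of $W_{\mathbb{Q}(\zeta_{N})}$, and let $f_{j}\in V$ be the form corresponding to $v_{j}$ under the basis $\{g_{i}\}$. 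By construction each $f_{j}$ has Fourier coefficients in $\mathbb{Q}(\zeta_{N})$ and satisfies $a_{f_{j}}(p)=0$ for every prime $p\nmid N$. Finally, since $(\alpha_{1},\ldots,\alpha_{r})\in W$ expands as a $\mathbb{C}$-linear combination of $v_{1},\ldots,v_{d}$, we obtain $f=\sum_{j=1}^{d}\beta_{j}f_{j}$ for suitable $\beta_{j}\in\mathbb{C}$, completing the proof.

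The argument is essentially bookkeeping once the right framework is in place; the only mild subtlety is the descent step $W=W_{\mathbb{Q}(\zeta_{N})}\otimes_{\mathbb{Q}(\zeta_{N})}\mathbb{C}$, which I do not expect to be a genuine obstacle.
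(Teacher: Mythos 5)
Your proof is correct, and it follows the same overall framework as the paper's: reduce to the finite-dimensional space $\bigoplus_{k=0}^{K}\widetilde{M}_{2k}(N)$, invoke Lemma~\ref{lem-qmfalgbasis} to get a basis with Fourier coefficients in $\mathbb{Q}(\zeta_{N})$, and then run a linear-algebra descent from $\mathbb{C}$ to $\mathbb{Q}(\zeta_{N})$.

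The one place you diverge is in how the descent is executed. You view the conditions $a_{f}(p)=0$ as a (possibly infinite) linear system over $\mathbb{Q}(\zeta_{N})$ and use the base-change invariance $W=W_{\mathbb{Q}(\zeta_{N})}\otimes_{\mathbb{Q}(\zeta_{N})}\mathbb{C}$ of its solution space, taking the $f_{j}$ to correspond to a rational basis of $W_{\mathbb{Q}(\zeta_{N})}$. The paper instead works on the coefficient side: it takes the $\mathbb{Q}(\zeta_{N})$-span $W$ of the scalars $\alpha_{1},\dots,\alpha_{m}$ inside $\mathbb{C}$, picks a basis $\beta_{1},\dots,\beta_{d}$ of that span, writes $\alpha_{i}=\sum_{j}\alpha_{i,j}\beta_{j}$, and sets $f_{j}:=\sum_{i}\alpha_{i,j}h_{i}$; the vanishing $a_{f_{j}}(p)=0$ then falls out immediately from the $\mathbb{Q}(\zeta_{N})$-linear independence of the $\beta_{j}$, since $0=a_{f}(p)=\sum_{j}\beta_{j}a_{f_{j}}(p)$ with $a_{f_{j}}(p)\in\mathbb{Q}(\zeta_{N})$. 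The paper's version avoids having to justify the (standard but not free) descent of the solution space of an infinite system, while yours produces $f_{j}$ that span the full rational solution space rather than just the pieces of $f$ itself; both are perfectly valid, and the only cosmetic gap in yours is the degenerate case $f=0$ (where $W_{\mathbb{Q}(\zeta_{N})}$ may be zero and $d=0$), which the paper dispatches separately.
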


\begin{proof}
    If $f=0$, take $d=1$, $f_{1}=1$ and $\beta_{1}=0$. 
    Now we assume that $f\neq 0$. 
    Then there is $k_{f}\in\mathbb{Z}_{\geq 0}$ such that $f\in\bigoplus_{k=0}^{k_{f}}\widetilde{M}_{2k}(N)$. By Lemma \ref{lem-qmfalgbasis}, the space $\bigoplus_{k=0}^{k_{f}}\widetilde{M}_{2k}(N)$ has a basis $\{h_{1},\ldots,h_{m}\}$ with $h_{j}\in\mathbb{Q}(\zeta_{N})[\![q]\!]$ for $j=1,\ldots,m$.  
    Write  
    $$f=\sum_{i=1}^{m}\alpha_{i} h_{i}.$$
    
    Let $W$ be the $\mathbb{Q}(\zeta_{N})$-vector space spanned by $\alpha_{1},\ldots,\alpha_{m}$, and choose a $\mathbb{Q}(\zeta_{N})$-basis $\beta_{1},\ldots,\beta_{d}$ for $W$.
    Then, for $i\in\{1,2,\ldots,m\}$, there exist $\alpha_{i,1},\ldots,\alpha_{i,d}\in\mathbb{Q}(\zeta_{N})$ such that $\alpha_{i}=\sum_{j=1}^{d}\alpha_{i,j}\beta_{j}$.
    Define, for $1\leq j\leq d$, 
    $$f_{j}:=\sum_{i=1}^{m} \alpha_{i,j}h_{i}=\sum_{n=0}^{\infty}a_{f_{j}}(n)q^{n}\in \mathbb{Q}(\zeta_{N})[[q]].$$
    Then $f_{j}\in\bigoplus_{k=0}^{k_{f}}\widetilde{M}_{2k}(N)\subset\widetilde{M}(N)$ for $j=1,2,\ldots,d$, and
    \begin{align*}
        f=\sum_{i=1}^{m}\alpha_{i}h_{i}=\sum_{i=1}^{m}\sum_{j=1}^{d}\alpha_{i,j}\beta_{j}h_{i}=\sum_{j=1}^{d}\beta_{j}\sum_{i=1}^{m}\alpha_{i,j}h_{i}=\sum_{j=1}^{d}\beta_{j}f_{j}.
    \end{align*}
    For each prime $p\nmid N$, we have 
    $$0=a_{f}(p)=\sum_{j=1}^{d} \beta_{j}a_{f_{j}}(p).$$
    Since $\{\beta_{j}\}$ is $\mathbb{Q}(\zeta_N)$-linearly independent, we get 
    $$a_{f_{j}}(p)=0$$
    for all $j\in\{1,2,\ldots,d\}$ and all primes $p\nmid N$.
\end{proof}

The next lemma is a straightforward linear-algebraic consequence, but we provide a proof for completeness. 

\begin{lemma}\label{lem-coefffield}
    Let $N$ be a positive integer, let $K$ be a subfield of $\mathbb{C}$, and let $h_{1},\dots,h_{r}\in\widetilde{M}(N)\cap K[\![q]\!]$. Suppose that the $h_{j}$ are $K$-linearly independent.
    If $\alpha_{1},\ldots,\alpha_{r}\in\mathbb{C}$ satisfy $\sum_{j=1}^{r}\alpha_{j}h_{j}\in K[\![q]\!]$, then $\alpha_{j}\in K$ for all $j$.
\end{lemma}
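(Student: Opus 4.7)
The plan is to reduce the assertion to a finite-dimensional linear-algebra statement by locating $r$ Fourier coefficient positions at which the $h_{j}$ exhibit an invertible matrix over $K$.

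First I would form the $K$-subspace $V := \mathrm{span}_{K}(h_{1},\ldots,h_{r}) \subseteq K[\![q]\!]$, which has $K$-dimension $r$ by the linear independence hypothesis. For each $n \geq 0$, the coefficient-extraction map $\mathrm{ev}_{n} : V \to K$ sending $h \mapsto a_{h}(n)$ is a $K$-linear functional, and the common kernel of the family $\{\mathrm{ev}_{n}\}_{n \geq 0}$ on $V$ is $\{0\}$, since the only element of $K[\![q]\!]$ whose Fourier coefficients all vanish is the zero series. Because $V^{\ast}$ has dimension $r$ over $K$, a dimension count produces nonnegative integers $n_{1},\ldots,n_{r}$ such that $\mathrm{ev}_{n_{1}},\ldots,\mathrm{ev}_{n_{r}}$ form a basis of $V^{\ast}$. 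Equivalently, the matrix
\[ A := \bigl(a_{h_{j}}(n_{i})\bigr)_{1 \leq i,j \leq r} \in M_{r}(K) \]
is invertible over $K$.

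Next I would compare Fourier coefficients in the given identity. Setting $g := \sum_{j=1}^{r} \alpha_{j} h_{j} \in K[\![q]\!]$ and extracting the coefficient of $q^{n_{i}}$ for $i = 1,\ldots,r$ yields
\[ \sum_{j=1}^{r} \alpha_{j}\, a_{h_{j}}(n_{i}) = a_{g}(n_{i}) \in K, \]
which in matrix form reads $A\,(\alpha_{1},\ldots,\alpha_{r})^{T} = (a_{g}(n_{1}),\ldots,a_{g}(n_{r}))^{T}$. Applying $A^{-1} \in M_{r}(K)$ and using that the right-hand side already lies in $K^{r}$, I conclude that $(\alpha_{1},\ldots,\alpha_{r}) \in K^{r}$, as required.

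There is no substantive obstacle here; the claim is purely formal and uses nothing from the modular-form theory beyond the inclusion $\widetilde{M}(N)\cap K[\![q]\!]\subset K[\![q]\!]$. The only point that merits a line of justification is the existence of the indices $n_{1},\ldots,n_{r}$, and this is immediate from the dimension count in $V^{\ast}$ sketched above.
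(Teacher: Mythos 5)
Your argument is correct and is essentially the paper's proof: both locate indices $n_{1},\dots,n_{r}$ making the matrix $A=(a_{h_{j}}(n_{i}))$ invertible over $K$ and then solve the resulting linear system, the only difference being that you phrase the existence of these indices via a dimension count in $V^{\ast}$ while the paper uses the equivalent orthogonal-complement argument for the span of the coefficient vectors in $K^{r}$.
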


\begin{proof}
    For convenience, let $f=\sum_{j=1}^{r}\alpha_{j}h_{j}$. Write 
    $$f(\tau)=\sum_{n=0}^{\infty}a_{f}(n)q^{n},\quad h_{j}(\tau)=\sum_{n=0}^{\infty} a_{h_{j}}(n)q^{n}\quad (j=1,\ldots,r).$$
    For each $n\geq 0$, define
    $$w(n):=(a_{h_{1}}(n),\ldots,a_{h_{r}}(n))\in K^{r},$$
    and let $V$ be the $K$-subspace of $K^{r}$ spanned by $\{w(n):n\geq 0\}$.

    We claim that $V=K^{r}$. 
    Indeed, note that the standard symmetric bilinear form on $K^{r}$ defined by
    $$(v_{1},\ldots,v_{r})\cdot (w_{1},\ldots,w_{r}):=v_{1}w_{1}+\cdots+v_{r}w_{r}$$
    is nondegenerate.
    Thus, $\dim_{K}(V)+\dim_{K}(V^{\perp})=\dim_{K}(K^{r})$, where $V^{\perp}:=\{v\in K^{r}:v\cdot w=0~\text{for all}~w\in V\}$.
    Let $v=(v_{1},\ldots,v_{r})$ be any vector in $V^{\perp}$.
    Since $v\cdot(a_{h_{1}}(n),\ldots,a_{h_{r}}(n))=0$ for all $n\geq 0$, we have
    $$\sum_{j=1}^{r}v_{j}h_{j}=\sum_{n=0}^{\infty}\left(\sum_{j=1}^{r}v_{j}a_{h_{j}}(n)\right)q^{n}=0.$$
    By the $K$-linear independence of the $h_{j}$, we obtain $v=0$. 
    Thus, $V^{\perp}=\{0\}$, and hence $\dim_{K}(V)=\dim_{K}(K^{r})$. This proves the claim.
    
    Since $V=K^{r}$, there exist nonnegative integers $n_{1},\ldots,n_{r}$ such that $\{w(n_{1}),\ldots,w(n_{r})\}$ is a $K$-basis for $K^{r}$.
    Let
    $$A:=(a_{h_{j}}(n_{i}))_{1\leq i,j\leq r}.$$
    Then $A\in\mathrm{GL}_{r}(K)$, because $\{w(n_{1}),\ldots,w(n_{r})\}$ is a basis for $K^{r}$.
    Moreover, from the equality $f=\sum_{j=1}^{r}\alpha_{j}h_{j}$, we obtain
    $$(a_{f}(n_{1}),\ldots,a_{f}(n_{r}))=(\alpha_{1},\ldots,\alpha_{r})A.$$
    Since $f\in K[\![q]\!]$, we have $a_{f}(n_{i})\in K$ for $i=1,\ldots,r$, and thus
    $$(\alpha_{1},\ldots,\alpha_{r})=(a_{f}(n_{1}),\ldots,a_{f}(n_{r}))A^{-1}\in K^{r}.$$
\end{proof}

In the following lemma, we show that every quasimodular oldform has the vanishing $p$-th Fourier coefficient for primes $p\nmid N$. 

\begin{lemma}\label{lem-quasiold}
    Let $N$ be a positive integer. If $p$ is a prime with $p\nmid N$, then the $p$-th Fourier coefficient of every element of $\widetilde{S}^{\mathrm{old}}(N)$ is zero.
\end{lemma}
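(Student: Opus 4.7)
The plan is to reduce to a basis calculation. By definition
$$\widetilde{S}^{\mathrm{old}}(N)=\bigoplus_{k>0}\widetilde{S}_{2k}^{\mathrm{old}}(N),\qquad \widetilde{S}_{2k}^{\mathrm{old}}(N)=\bigoplus_{i=1}^{k}\bigoplus_{L\mid N}\bigoplus_{f\in\mathcal{N}_{2i}(L)}\bigoplus_{1<n\mid(N/L)}\mathbb{C}\cdot D^{k-i}(f(n\tau)),$$
so every element of $\widetilde{S}^{\mathrm{old}}(N)$ is a finite $\mathbb{C}$-linear combination of functions of the form $D^{k-i}(f(n\tau))$ with $n\mid N$ and $n>1$. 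Since the $p$-th Fourier coefficient is $\mathbb{C}$-linear in the form, it suffices to prove that $a_{D^{k-i}(f(n\tau))}(p)=0$ for each such basis element and each prime $p\nmid N$.

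Next I would compute this coefficient directly. If $f(\tau)=\sum_{m=1}^{\infty}a_{f}(m)q^{m}$, then
$$f(n\tau)=\sum_{m=1}^{\infty}a_{f}(m)q^{nm},$$
and since $D=q\frac{d}{dq}$ acts on a power series by multiplying the coefficient of $q^{M}$ by $M^{k-i}$,
$$D^{k-i}(f(n\tau))=\sum_{m=1}^{\infty}(nm)^{k-i}a_{f}(m)q^{nm}.$$
Hence only exponents divisible by $n$ appear in this series.

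Finally, I would observe that if the coefficient of $q^{p}$ were nonzero, then $n\mid p$; since $p$ is prime and $n>1$ this forces $n=p$. But $n\mid N$ and $p\nmid N$, a contradiction. Therefore $a_{D^{k-i}(f(n\tau))}(p)=0$, and linearity concludes the proof. The argument is essentially bookkeeping from the explicit basis of $\widetilde{S}^{\mathrm{old}}(N)$ given in Section \ref{sec-prel}, so there is no real obstacle; the only point to be careful about is that $D^{k-i}$ preserves the support of the Fourier expansion, which is immediate from $D=q\,d/dq$.
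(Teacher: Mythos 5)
Your proposal is correct and follows essentially the same route as the paper: expand the spanning elements $D^{r}(f(n\tau))$ explicitly, note that only exponents divisible by $n>1$ occur, and conclude that $q^{p}$ cannot appear since $n\mid N$ while $p\nmid N$. The only cosmetic difference is that you pass through ``$n\mid p$ forces $n=p$'' where the paper simply observes that $p$ cannot be a multiple of $n$; both are immediate.
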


\begin{proof}
    Let $L\mid N$, let $f(\tau)=\sum_{m=1}^{\infty}a_{f}(m)q^{m}$ be a holomorphic cusp form on $\Gamma_{0}(L)$, let $n\geq 2$ be a divisor of $N/L$, and set $g(\tau):=f(n\tau)$. If $r$ is a nonnegative integer, then
    $$(D^{r}g)(\tau)=\sum_{m=1}^{\infty}(mn)^{r}a_{f}(m)q^{mn}.$$
    Since $p\nmid N$ and $1<n\mid N$, $p$ cannot be a multiple of $n$. Thus, $p$-th Fourier coefficient of $D^{r}g$ is zero.

    Note that the set
    $$\bigcup_{L\mid N}\{D^{k-i}(f(n\tau)):k\in\mathbb{Z}_{>0},~1\leq i\leq k,~f\in\mathcal{N}_{2i}(L),~1<n\mid(N/L)\}$$
    spans the space $\widetilde{S}^{\mathrm{old}}(N)$. By the above discussion, the $p$-th Fourier coefficient of each spanning element is zero, and therefore the $p$-th Fourier coefficient of every element of $\widetilde{S}^{\mathrm{old}}(N)$ is zero.
\end{proof}

Next, we establish an analogous result for certain quasimodular Eisenstein series.

\begin{lemma}\label{lem-quasioldeis}
    Let $N$ be a positive integer, let $k\geq 2$ be an even integer, let $(\varphi,t)\in A_{N,k}$, let $r\in\mathbb{Z}_{\geq 0}$ and let $p$ be a prime with $p\nmid N$.
    \begin{enumerate}
        \item[\textnormal{(i)}] If $(k,\varphi)\neq (2,\mathbf{1}_{1})$ and $t>1$, then the $p$-th Fourier coefficient of $D^{r}E_{k}^{\varphi,t}$ is $0$.

        \item[\textnormal{(ii)}] If $(k,\varphi)=(2,\mathbf{1}_{1})$ and $t>1$, then the $p$-th Fourier coefficient of $D^{r}E_{k}^{\varphi,t}=D^{r}E_{2}^{\mathbf{1}_1,t}$ equals the $p$-th Fourier coefficient of $D^{r}E_{2}$.
    \end{enumerate}
\end{lemma}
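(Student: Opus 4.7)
The plan is to reduce both statements to the elementary observation that, for any $q$-series $g(\tau)=c_{0}+\sum_{m\geq 1}b(m)q^{m}$, the function $g(t\tau)=c_{0}+\sum_{m\geq 1}b(m)q^{tm}$ has nonzero nonconstant Fourier coefficients only at multiples of $t$, and this support is preserved by $D=q\,d/dq$, since $D^{r}g(t\tau)=\sum_{m\geq 1}(tm)^{r}b(m)q^{tm}$.

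First I would record the key arithmetic fact used throughout: if $(\varphi,t)\in A_{N,k}$ with $t>1$, then in particular $t\mid N$, so every prime $p$ with $p\nmid N$ satisfies $p\neq t$; since $t>1$, $p$ is not a multiple of $t$. Therefore, for any holomorphic function of the form $h(\tau)=\sum_{m\geq 0}b(m)q^{m}$, the $p$-th Fourier coefficient of $D^{r}h(t\tau)$ equals $0$ for every such $p$ and every $r\in\mathbb{Z}_{\geq 0}$.

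For part (i), by the definition recalled in Section~\ref{sec-prel} one has $E_{k}^{\varphi,t}(\tau)=E_{k}^{\varphi}(t\tau)$ in all subcases covered, namely when $k>2$ (any $\varphi$) and when $k=2$ with $\varphi\neq\mathbf{1}_{1}$. Writing $E_{k}^{\varphi}(\tau)=c_{0}+\sum_{m\geq 1}b(m)q^{m}$, the above observation applied to $h=E_{k}^{\varphi}$ shows that the $p$-th Fourier coefficient of $D^{r}E_{k}^{\varphi,t}$ vanishes for every prime $p\nmid N$.

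For part (ii), we have $E_{2}^{\mathbf{1}_{1},t}(\tau)=E_{2}(\tau)-tE_{2}(t\tau)$, so
\[ D^{r}E_{2}^{\mathbf{1}_{1},t}(\tau)=D^{r}E_{2}(\tau)-t\,D^{r}\bigl(E_{2}(t\tau)\bigr). \]
Applying the key observation with $h=E_{2}$, the $p$-th Fourier coefficient of $D^{r}(E_{2}(t\tau))$ is zero for every prime $p\nmid N$. Hence the $p$-th Fourier coefficient of $D^{r}E_{2}^{\mathbf{1}_{1},t}$ coincides with that of $D^{r}E_{2}$, as claimed. There is no real obstacle here; the entire content is bookkeeping around the fact that dilation by $t$ concentrates the $q$-expansion on $t\mathbb{Z}_{\geq 0}$, together with the divisibility observation that forbids a prime $p\nmid N$ from lying in $t\mathbb{Z}_{>0}$ when $t>1$ and $t\mid N$.
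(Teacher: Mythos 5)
Your proposal is correct and follows essentially the same route as the paper: both arguments observe that $E_{k}^{\varphi,t}$ (resp.\ the $E_{2}(t\tau)$ part of $E_{2}^{\mathbf{1}_{1},t}$) has $q$-expansion supported on multiples of $t$, that $D^{r}$ preserves this support, and that $t>1$ with $t\mid N$ forces $p\nmid N$ to avoid $t\mathbb{Z}_{>0}$. No gaps.
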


\begin{proof}
    (i) For convenience, set $f(\tau)=E_{k}^{\varphi}(\tau)$. Write $f(\tau)=\sum_{n=0}^{\infty}a_{f}(n)q^{n}$. Since $(k,\varphi)\neq(2,\mathbf{1}_{1})$, $E_{k}^{\varphi,t}(\tau)=E_{k}^{\varphi}(t\tau)=f(t\tau)=\sum_{n=0}^{\infty}a_{f}(n)q^{tn}$. It follows that
    $$(D^{r}E_{k}^{\varphi,t})(\tau)=\sum_{n=0}^{\infty}(tn)^{r}a_{f}(n)q^{tn}.$$
    Since $1<t\mid N$ and $p\nmid N$, $p$ is not a multiple of $t$. Hence, the coefficient of $q^{p}$ in the Fourier expansion of $D^{r}E_{k}^{\varphi,t}$ is zero.

    (ii) By definition, $E_{2}^{\mathbf{1}_{1},t}(\tau)=E_{2}(\tau)-tE_{2}(t\tau)$. Thus,
    $$(D^{r}E_{2}^{\mathbf{1}_{1},t})(\tau)=(D^{r}E_{2})(\tau)-t(D^{r}E_{2}(t\tau)).$$
    Arguing as in the previous case, we see that the $p$-th Fourier coefficient of $(D^{r}E_{2}(t\tau))$ is zero. Consequently, the $p$-th Fourier coefficient of $D^{r}E_{k}^{\varphi,t}=D^{r}E_{2}^{\mathbf{1}_1,t}$ is equal to that of $D^{r}E_{2}$.
\end{proof}

Let $N$ be a positive integer. Let $g=\sum_{n=1}^{\infty}a_{g}(n)q^{n}$ (resp. $g'=\sum_{n=1}^{\infty}a_{g'}(n)q^{n}$) be a cusp form of weight $k$ (resp. $k'$) on $\Gamma_{0}(N)$. Assume that $g\in\mathcal{N}_{k}(L)$ and $g'\in\mathcal{N}_{k'}(L')$ for some $L,L'\mid N$. Then there exists a number field $K$ such that $a_{g}(n), a_{g'}(n)\in K$ for all $n\in\mathbb{Z}_{>0}$. Fix a prime $\ell\nmid N$ and a finite place $\lambda$ of $K$ lying above $\ell$. Let $\rho_{g,\lambda}$ (resp. $\rho_{g',\lambda}$) be the Galois representation attached to $g$ (resp. $g'$) as in Proposition \ref{prop-cuspgalrep}.

In the next lemma, we compare two Galois representations twisted by powers of cyclotomic characters. 

\begin{lemma}\label{lem-rigidity}
    Let $m,m'\in\mathbb{Z}_{\geq 0}$. If $$\rho_{g,\lambda}\otimes\chi_{\ell}^{m}\cong\rho_{g',\lambda}\otimes\chi_{\ell}^{m'},$$ 
    then $g=g'$ and $m=m'$. Here, $\chi_{\ell}$ denotes the $\ell$-adic cyclotomic character.
\end{lemma}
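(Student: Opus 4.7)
My plan is to compare Hodge--Tate weights of the two sides on restriction to the decomposition group at $\ell$, reduce to an untwisted isomorphism of Galois representations, and then invoke strong multiplicity one for newforms. Since $\ell\nmid N$ and both levels $L,L'$ divide $N$, the restrictions $\rho_{g,\lambda}|_{G_{\mathbb{Q}_\ell}}$ and $\rho_{g',\lambda}|_{G_{\mathbb{Q}_\ell}}$ are crystalline, and a standard input from $p$-adic Hodge theory (Faltings--Tsuji) identifies their Hodge--Tate weights as $\{0,k-1\}$ and $\{0,k'-1\}$ respectively, using the convention that $\chi_\ell$ has Hodge--Tate weight $1$.

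Twisting by $\chi_\ell^m$ (resp.\ $\chi_\ell^{m'}$) shifts every weight by $m$ (resp.\ $m'$), so the given isomorphism forces the multiset identity
\[
\{m,\;k-1+m\}=\{m',\;k'-1+m'\}.
\]
There are two a priori possibilities. The diagonal matching $m=m'$ and $k-1+m=k'-1+m'$ yields $m=m'$ together with $k=k'$. The swapped matching $m=k'-1+m'$ and $k-1+m=m'$ gives $m-m'=k'-1$ and $m'-m=k-1$, hence $k+k'=2$, contradicting $k,k'\ge 2$. Thus only the diagonal case occurs.

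With $m=m'$ and $k=k'$, the original isomorphism reduces to $\rho_{g,\lambda}\cong\rho_{g',\lambda}$. Comparing traces at $\mathrm{Frob}_p$ for every prime $p\nmid NN'\ell$ via Proposition~\ref{prop-cuspgalrep}(iii) yields $a_g(p)=a_{g'}(p)$ for all but finitely many primes. Strong multiplicity one for newforms (Atkin--Lehner) then forces $L=L'$ and $g=g'$, completing the proof. The principal obstacle is the Hodge--Tate input: the determinant relation alone only provides $k+2m=k'+2m'$, and Frobenius traces at primes $p\ne\ell$ only yield $p^m a_g(p)=p^{m'}a_{g'}(p)$, so without the $p$-adic Hodge-theoretic fact that the Hodge--Tate weights of $\rho_{g,\lambda}|_{G_{\mathbb{Q}_\ell}}$ are prescribed by the weight $k$, one cannot disentangle $k$ from the twist parameter $m$.
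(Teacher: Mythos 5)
Your proof is correct, but it takes a genuinely different route from the paper's. Where you pin down the weight (and the twist exponent) by restricting to the decomposition group at $\ell$ and comparing Hodge--Tate weights --- using that $\rho_{g,\lambda}|_{G_{\mathbb{Q}_\ell}}$ is crystalline with weights $\{0,k-1\}$ since $\ell\nmid N$ --- the paper instead works on the automorphic side: it passes to the cuspidal automorphic representations $\pi_g,\pi_{g'}$, normalizes them by $|\det|^{-(k-1)/2}$ and $|\det|^{-(k'-1)/2}$, deduces from the trace and determinant identities that the normalized Satake parameters agree at all $p\nmid \ell N$, invokes strong multiplicity one for $\mathrm{GL}_2$ to identify the normalized representations globally, and then recovers $k=k'$ by comparing minimal $\mathrm{SO}_2(\mathbb{R})$-types of the archimedean components. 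Both arguments finish the same way, with strong multiplicity one for newforms giving $g=g'$ once $m=m'$ and $k=k'$ are known. Your version is shorter and isolates exactly the right local invariant --- your multiset computation $\{m,k-1+m\}=\{m',k'-1+m'\}$ cleanly rules out the swapped matching via $k+k'=2$ --- at the cost of importing the $p$-adic Hodge-theoretic input (Faltings/Scholl for the crystallinity and Hodge--Tate weights of $\rho_{g,\lambda}$), whereas the paper's route stays within the automorphic framework it already cites (Getz--Hahn) and needs no comparison theorems. Two trivial notational points: since both levels $L,L'$ divide the single integer $N$ of the lemma's setup, the set of excluded primes is just $p\mid N\ell$, not $p\mid NN'\ell$; and you should fix a sign convention for the Hodge--Tate weight of $\chi_\ell$ once and use it consistently, though either choice works.
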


\begin{proof}
    Since $\chi_{\ell}(\mathrm{Frob}_{p})=p$ for all primes $p$ with $p\nmid \ell N$ and $\rho_{g,\lambda}\otimes\chi_{\ell}^{m}\cong\rho_{g',\lambda}\otimes\chi_{\ell}^{m'}$, the following hold for every prime $p$ with $p\nmid\ell N$:
    \begin{align*}
        a_{g}(p)p^{m}&=\mathrm{tr}((\rho_{g,\lambda}\otimes\chi_{\ell}^{m})(\mathrm{Frob}_{p}))=\mathrm{tr}((\rho_{g',\lambda}\otimes\chi_{\ell}^{m'})(\mathrm{Frob}_{p}))=a_{g'}(p)p^{m'},\\
        p^{k-1+2m}&=\det(p^{m}\cdot\rho_{g,\lambda}(\mathrm{Frob}_{p}))=\det((\rho_{g,\lambda}\otimes\chi_{\ell}^{m})(\mathrm{Frob}_{p}))\\
        &=\det((\rho_{g',\lambda}\otimes\chi_{\ell}^{m'})(\mathrm{Frob}_{p}))=\det(p^{m'}\cdot\rho_{g',\lambda}(\mathrm{Frob}_{p}))=p^{k'-1+2m'}.
    \end{align*}
    Thus, for all primes $p\nmid\ell N$,
    \begin{equation}\label{eq-trdetrel}
        a_{g}(p)p^{m}=a_{g'}(p)p^{m'}\quad\text{and}\quad m'-m=\frac{k-k'}{2}.
    \end{equation}

    Let $\pi_g=\bigotimes' \pi_{g,p}$ and $\pi_{g'}=\bigotimes' \pi_{g',p}$ be the cuspidal automorphic representations of $\mathrm{GL}_2(\mathbb{A}_{\mathbb{Q}})$ attached to $g$ and $g'$, respectively (cf. \cite[Section 6.7]{GH24}). Consider the representations
    $$\Pi_{g}:=\pi_{g}\otimes|\det|^{-(k-1)/2},\quad\Pi_{g'}:=\pi_{g'}\otimes|\det|^{-(k'-1)/2}.$$
    It is known that, for each prime $p\nmid N$, the local components $\pi_{g,p}$ and $\pi_{g',p}$ are unramified principal series representations of $\mathrm{GL}_{2}(\mathbb{Q}_{p})$, with Satake parameters given by the roots of
    $$x^{2}-a_{g}(p)x+p^{k-1}\quad\text{and}\quad x^{2}-a_{g'}(p)x+p^{k'-1},$$
    respectively. Hence, for every prime $p\nmid N$, the representations $\Pi_{g,p}$ and $\Pi_{g',p}$ are unramified principal series and their Satake parameters are the roots of 
    $$x^{2}-a_{g}(p)p^{-(k-1)/2}x+1\quad\text{and}\quad x^{2}-a_{g'}(p)p^{-(k'-1)/2}x+1,$$ 
    respectively. By \eqref{eq-trdetrel}, for all primes $p\nmid\ell N$,
    \begin{equation*}
        \frac{a_{g}(p)}{p^{(k-1)/2}}=\frac{a_{g'}(p)}{p^{(k'-1)/2}}.
    \end{equation*}
    Consequently, for any prime $p$ with $p\nmid\ell N$, the representations $\Pi_{g,p}$ and $\Pi_{g',p}$ have the same Satake parameters. It follows that
    $$\Pi_{g,p}\cong\Pi_{g',p}$$
    for all primes $p\nmid\ell N$. By the strong multiplicity one theorem for $\mathrm{GL}_{2}$ (cf. \cite[Theorem 11.7.2]{GH24}),
    $$\Pi_{g}\cong\Pi_{g'}.$$
    In particular,
    $$\Pi_{g,\infty}\cong\Pi_{g',\infty}$$
    as $(\mathfrak{gl}_{2}(\mathbb{R}),\mathrm{SO}_{2}(\mathbb{R}))$-module. Since $|\det|_{\infty}^{-(k-1)/2}$ and $|\det|_{\infty}^{-(k'-1)/2}$ are trivial as representations of $\mathrm{SO}_{2}(\mathbb{R})$,
    $$\Pi_{g,\infty}\cong\pi_{g,\infty}\quad\text{and}\quad\Pi_{g',\infty}\cong\pi_{g',\infty}$$
    as representations of $\mathrm{SO}_{2}(\mathbb{R})$. This implies that $\Pi_{g,\infty}$ and $\pi_{g,\infty}$ (resp. $\Pi_{g',\infty}$ and $\pi_{g',\infty}$) have the same $\mathrm{SO}_{2}(\mathbb{R})$-types. Since $\Pi_{g,\infty}\cong\Pi_{g',\infty}$, $\pi_{g,\infty}$ and $\pi_{g',\infty}$ have the same set of $\mathrm{SO}_{2}(\mathbb{R})$ types, and so their minimal $\mathrm{SO}_{2}(\mathbb{R})$-types coincide. For a newform, this minimal $\mathrm{SO}_{2}(\mathbb{R})$-type is precisely the weight, and hence $k=k'$. Returning to \eqref{eq-trdetrel}, we conclude that 
    $$m=m'.$$
    By \eqref{eq-trdetrel} again, $a_{g}(p)=a_{g'}(p)$ for all primes $p\nmid\ell N$. Therefore,
    $$g=g'$$
    by the strong multiplicity one theorem applied to newforms $g$ and $g'$.
\end{proof}

Finally, we introduce the linear independence of characters, which is the key technical result in our proof of Theorem \ref{thm1}.

\begin{lemma}\label{lem-charindep}
    Let $S$ be a finite set of primes, and let $F$ be a field of characteristic $0$.
    Let $\sigma_{1},\ldots,\sigma_{m}$ be pairwise non-isomorphic, continuous, semisimple representations which are unramified outside $S$.
    Assume that $c_{1},\ldots,c_{m}\in F$ satisfy
    $$\sum_{i=1}^{m}c_{i}\mathrm{tr}(\sigma_{i}(\mathrm{Frob}_{p}))=0$$
    for all primes $p\not\in S$.
    Then $c_{1}=\cdots=c_{m}=0$.
\end{lemma}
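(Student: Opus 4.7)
The plan is to first promote the given vanishing from Frobenius elements to all elements of a suitable Galois group via the Chebotarev density theorem, and then to conclude $c_1=\cdots=c_m=0$ via the classical linear independence of characters of irreducible representations.

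Specifically, let $G_{\mathbb{Q},S}$ denote the Galois group of the maximal extension of $\mathbb{Q}$ unramified outside $S$. Since each $\sigma_i$ is unramified outside $S$, it factors through $G_{\mathbb{Q},S}$, and each trace $\chi_i:=\mathrm{tr}\circ\sigma_i$ is then a continuous class function on this profinite group. The Chebotarev density theorem asserts that the conjugacy classes $\{[\mathrm{Frob}_p]:p\notin S\}$ are dense in the space of conjugacy classes of $G_{\mathbb{Q},S}$. Combined with the continuity of the $\chi_i$, this upgrades the hypothesis to
\[ \sum_{i=1}^{m} c_i\,\chi_i(g)=0 \qquad\text{for all } g\in G_{\mathbb{Q},S}. \]

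To finish, I would decompose each semisimple $\sigma_i$ into its irreducible constituents, collect the distinct irreducibles appearing as $\tau_1,\ldots,\tau_r$ with multiplicities $n_{ij}\in\mathbb{Z}_{\geq 0}$, and rewrite the identity as $\sum_{j=1}^{r}\bigl(\sum_{i=1}^{m} c_i n_{ij}\bigr)\chi_{\tau_j}=0$ on $G_{\mathbb{Q},S}$. The classical theorem on linear independence of characters of pairwise non-isomorphic irreducible representations over a field of characteristic zero then yields $\sum_{i=1}^{m} c_i n_{ij}=0$ for every $j$, and the pairwise non-isomorphism of the $\sigma_i$ forces $c_i=0$ for every $i$.

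The main technical ingredient is the Chebotarev density theorem, which converts the arithmetic vanishing at Frobenius elements into a purely group-theoretic identity on all of $G_{\mathbb{Q},S}$; once that has been achieved, the conclusion reduces to standard character theory and should not present any serious obstacle.
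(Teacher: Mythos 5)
Your overall strategy --- Chebotarev density to upgrade the vanishing from Frobenius elements to a pointwise identity on all of $G_{\mathbb{Q},S}$, followed by linear independence of characters of irreducible representations --- is exactly what the paper intends: its own ``proof'' is a bare citation of \cite[Lemma 4.1.18]{GW09} (a character-independence statement), with the Chebotarev/continuity step left implicit. Those first two steps of yours are fine.

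The gap is in your last sentence. From $\sum_{j}\bigl(\sum_{i} c_i n_{ij}\bigr)\chi_{\tau_j}=0$ and the independence of the $\chi_{\tau_j}$ you correctly get $\sum_{i} c_i n_{ij}=0$ for every $j$, i.e.\ $c^{T}N=0$ where $N=(n_{ij})$ is the multiplicity matrix. To conclude $c=0$ you need the \emph{rows} of $N$ to be linearly independent, but pairwise non-isomorphism of semisimple representations only makes these rows pairwise \emph{distinct}, and distinct nonnegative integer vectors can be dependent: take $\sigma_1,\sigma_2$ irreducible and non-isomorphic and $\sigma_3=\sigma_1\oplus\sigma_2$; then $\mathrm{tr}\,\sigma_1(\mathrm{Frob}_p)+\mathrm{tr}\,\sigma_2(\mathrm{Frob}_p)-\mathrm{tr}\,\sigma_3(\mathrm{Frob}_p)=0$ for all $p\notin S$, so the lemma \emph{as stated} (for merely semisimple $\sigma_i$) is false and no proof can close this step. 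The statement, and your argument, become correct once each $\sigma_i$ is assumed irreducible --- then $N$ is (after reindexing) the identity matrix and your final inference is immediate --- and that is the only way the lemma is used in the proof of Theorem \ref{thm1}, where the $\eta_j$ and $\rho_j$ are all irreducible. So the defect lies in the hypothesis ``semisimple'' rather than in your reductions, but as written your closing sentence is a non sequitur and you should either add the irreducibility hypothesis or exhibit the counterexample.
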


\begin{proof}
    See \cite[Lemma 4.1.18]{GW09}.
\end{proof}

We are now ready to prove Theorem \ref{thm1}.

\begin{proof}[Proof of Theorem \ref{thm1}]
    Let $f=\sum_{n=0}^{\infty}a_{f}(n)q^{n}$ be a prime-detecting quasimodular form on $\Gamma_{0}(N)$. 
    Then $a_{f}(p)=0$ for every prime $p\nmid N$. 
    Using the decomposition $\widetilde{M}(N)=\widetilde{E}(N)\oplus\widetilde{S}^{\mathrm{new}}(N)\oplus\widetilde{S}^{\mathrm{old}}(N)$, we can write $f$ uniquely as
    $$f=f_{E}+f_{S}^{\mathrm{new}}+f_{S}^{\mathrm{old}},$$
    where $f_{E}\in\widetilde{E}(N)$, $f_{S}^{\mathrm{new}}\in\widetilde{S}^{\mathrm{new}}(N)$ and $f_{S}^{\mathrm{old}}\in\widetilde{S}^{\mathrm{old}}(N)$.
    Set
    $$g=f-f_{S}^{\mathrm{old}}=f_{E}+f_{S}^{\mathrm{new}}.$$
    By Lemma \ref{lem-quasiold}, $a_{f_{S}^{\mathrm{old}}}(p)=0$ for every prime $p\nmid N$. 
    Thus,
    $$a_{g}(p)=a_{f}(p)-a_{f_{S}^{\mathrm{old}}}(p)=a_{f}(p)=0.$$
    Therefore, $g$ is again prime-detecting. Replacing $f$ by $g$, we may assume from now on that
    \begin{equation}\label{eq-noold}
        f=f_{E}+f_{S}^{\mathrm{new}}.
    \end{equation}

    Next, we treat the Eisenstein part $f_{E}$. 
    By \eqref{eq-qmfeis}, \eqref{eq-qmfeisbasis} and \eqref{eq-qmfeistotal}, $f_{E}$ is a finite linear combination of the constant function $1$, quasimodular forms of the form $D^{r}E_{2i}^{\varphi,t}$ ($r\in\mathbb{Z}_{\geq 0}$, $i\in\mathbb{Z}_{>0}$ and $(\varphi,t)\in A_{N,2i}$) and quasimodular forms of the form $D^{r}E_{2}$ ($r\in\mathbb{Z}_{\geq 0}$).
    In the case $(2i,\varphi)=(2,\mathbf{1}_{1})$, we have $E_{2}^{\mathbf{1}_{1},t}(\tau)=E_{2}(\tau)-tE_{2}(t\tau),$
    and hence $f_{E}$ may be rewritten as a finite linear combination of the following types of quasimodular forms: the constant function $1$; the forms $D^{r}E_{2i}^{\varphi,t}$ with $r\in\mathbb{Z}_{\geq 0}$, $i\in\mathbb{Z}_{>0}$, $(\varphi,t)\in A_{N,2i}$ and $(2i,\varphi)\neq (2,\mathbf{1}_{1})$; the forms $D^{r}E_{2}$ with $r\in\mathbb{Z}_{\geq 0}$; and the forms $D^{r}(E_{2}(t\tau))$ with $r\in\mathbb{Z}_{\geq 0}$ and $t>1$.
    By Lemma~\ref{lem-quasioldeis}(i), if $(2i,\varphi)\neq(2,\mathbf{1}_1)$, $t>1$ and $r\geq 0$, then the $p$-th Fourier coefficient of $D^{r}E_{2i}^{\varphi,t}$ is zero for every prime $p\nmid N$. 
    Moreover, the argument in the proof of Lemma~\ref{lem-quasioldeis}(ii) shows that for every $t>1$ and every $r\ge 0$, the $p$-th Fourier coefficient of $D^{r}(E_{2}(t\tau))$ is also zero. 
    Consequently, if we collect from the above linear combination of $f_{E}$ all terms involving either the constant function $1$, or a form $D^{r}E_{2i}^{\varphi,t}$ with $(2i,\varphi)\neq(2,\mathbf{1}_1)$ and $t>1$, or a form $D^{r}(E_{2}(t\tau))$ with $t>1$, and we denote their sum by $f_{E,>1}$, then $a_{f_{E,>1}}(p)=0$ for every prime $p\nmid N$.
    
    Define
    $$f_{E,\leq 1}:=f_{E}-f_{E,>1},\quad h:=f-f_{E,>1}=f_{E,\leq 1}+f_{S}^{\mathrm{new}}.$$
    Then $h\in\widetilde{M}(N)$ and, for every prime $p\nmid N$,
    \begin{equation}\label{eq-primevanishing}
        a_{h}(p)=a_{f}(p)-a_{f_{E,>1}}(p)=a_{f}(p)=0.
    \end{equation}
    In particular, $h$ is prime-detecting.
    Note that $f_{E,\leq 1}$ is a finite $\mathbb{C}$-linear combination of quasimodular forms of the form $D^{r}E_{2i}^{\varphi}$ together with quasimodular forms of the form $D^{r}E_{2}$.
    Replacing $f$ by $h$, we may assume from now on that
    \begin{equation}\label{eq-noeisold}
        f=f_{E,\leq 1}+f_{S}^{\mathrm{new}}.
    \end{equation}
    Then our claim becomes $f_{S}^{\mathrm{new}}=0$.
    By \eqref{eq-qmfeisbasis}, \eqref{eq-qmfnew}, \eqref{eq-qmfeistotal} and \eqref{eq-qmfnewtotal}, there exist a positive integer $k_{f}$ and coefficients
    \begin{align*}
        &a_{k}\in\mathbb{C}\quad(1\leq k\leq k_{f}),\quad b_{k,\varphi}\quad(1\leq k\leq k_{f},~(\varphi,1)\in A_{N,2},~\varphi\neq\mathbf{1}_{1}),\\
        &c_{k,i,\varphi}\quad(1\leq k\leq k_{f},~2\leq i\leq k,~(\varphi,1)\in A_{N,2i}),\\
        &d_{k,i,L,g}\quad(1\leq i\leq k\leq k_{f},~L\mid N,~g\in\mathcal{N}_{2i}(L))
    \end{align*}
    such that
    \begin{equation}\label{eq-finalform}
        \begin{split}
            f(\tau)&=\sum_{k=1}^{k_{f}}\sum\limits_{(\varphi,1)\in A_{N,2}}b_{k,\varphi}D^{k-1}E_{2}^{\varphi}(\tau)+\sum_{k=1}^{k_{f}}\sum_{i=2}^{k}\sum_{(\varphi,1)\in A_{N,2i}}c_{k,i,\varphi}D^{k-i}E_{2i}^{\varphi}(\tau)\\
        &\quad+\sum_{k=1}^{k_{f}}a_{k}D^{k-1}E_{2}(\tau)+\sum_{k=1}^{k_{f}}\sum_{i=1}^{k}\sum_{L\mid N}\sum_{g\in\mathcal{N}_{2i}(L)}d_{k,i,L,g}D^{k-i}g(\tau).
        \end{split}
    \end{equation}

    By Lemma \ref{lem-reduction}, we may assume without loss of generality that
    \begin{equation}\label{eq-algassumption}
        f\in \widetilde{M}(N)\cap \mathbb{Q}(\zeta_N)[\![q]\!].
    \end{equation}
    Let $K$ be a number field containing $\mathbb{Q}(\zeta_{N})$ and the Fourier coefficients of all newforms $g$ that occur in \eqref{eq-finalform}, and fix a prime $\ell\nmid N$ and a finite place $\lambda$ of $K$ lying above $\ell$.
    Since $f\in K[\![q]\!]$ and the quasimodular forms of the forms $D^{r}E_{2}^{\varphi}$, $D^{r}E_{2i}^{\varphi}$, $D^{r}E_{2}$ and $D^{r}g$ occurring in the right-hand side of \eqref{eq-finalform} are linearly independent over $K$, Lemma \ref{lem-coefffield} shows that all coefficients appearing in \eqref{eq-finalform} lie in $K$.

    For each quadruple $(k,i,L,g)$ with $1\leq i\leq k\leq k_f$, $L\mid N$, $g\in\mathcal{N}_{2i}(L)$
    and $d_{k,i,L,g}\neq 0$, put
    $$\rho_{k,i,L,g}:=\rho_{g,\lambda}\otimes\chi_{\ell}^{k-i}.$$
    Then for every prime $p\nmid N\ell$, Proposition \ref{prop-cuspgalrep} gives
    \begin{equation}\label{eq-tr-cusp}
        \mathrm{tr}\bigl(\rho_{k,i,L,g}(\mathrm{Frob}_p)\bigr)
        =\mathrm{tr}\bigl((\rho_{g,\lambda}\otimes\chi_\ell^{\,k-i})(\mathrm{Frob}_p)\bigr)
        =a_g(p)\,p^{k-i},
    \end{equation}
    which is exactly the $p$-th Fourier coefficient of $D^{k-i}g$.

    Next, consider the Eisenstein terms in \eqref{eq-finalform}. For $i\geq 2$ and $(\varphi,1)\in A_{N,2i}$ and a prime $p\nmid\ell N$,
    the $p$-th Fourier coefficient of $E_{2i}^{\varphi}$ is
    $$a_{E_{2i}^{\varphi}}(p)=2(\overline{\varphi}(p)+\varphi(p)p^{2i-1}),$$
    and therefore the $p$-th Fourier coefficient of $D^{k-i}E_{2i}^{\varphi}$ equals
    \begin{equation}\label{eq-eiscoeff}
        a_{D^{k-i}E_{2i}^{\varphi}}(p)
        =2(\overline{\varphi}(p)p^{k-i}+\varphi(p)p^{k+i-1})
        =2\mathrm{tr}((\overline{\varphi}\chi_{\ell}^{k-i}\oplus \varphi\chi_{\ell}^{k+i-1})(\mathrm{Frob}_{p})),
    \end{equation}
    where we used $\chi_{\ell}(\mathrm{Frob}_p)=p$ for $p\nmid \ell N$.
    Similarly, for $(\varphi,1)\in A_{N,2}$ with $\varphi\neq\mathbf{1}_1$ and any prime $p\nmid\ell N$,
    $$a_{D^{k-1}E_{2}^{\varphi}}(p)=2(\overline{\varphi}(p)p^{k-1}+\varphi(p)p^{k})=2\mathrm{tr}((\overline{\varphi}\chi_{\ell}^{k-1}\oplus\varphi\chi_{\ell}^{k})(\mathrm{Frob}_{p})).$$
    Finally, since $a_{E_{2}}(p)=-24(1+p)$ for all primes $p$, we have
    \begin{equation}\label{eq-etwocoeff}
        a_{D^{k-1}E_{2}}(p)=-24(p^{k-1}+p^{k})
        =-24\mathrm{tr}((\chi_{\ell}^{k-1}\oplus\chi_{\ell}^{k})(\mathrm{Frob}_{p})).
    \end{equation}

    After grouping together identical $1$-dimensional characters (if any),
    there exist finitely many pairwise distinct characters $\eta_{1},\ldots,\eta_{u}$ of $G_{\mathbb{Q}}$
    (of the form $\psi\chi_{\ell}^{r}$ with $\psi$ a Dirichlet character of conductor dividing $N$)
    and coefficients $\alpha_{1},\ldots,\alpha_{u}\in K_{\lambda}$, together with finitely many $\rho_{1},\ldots,\rho_{v}$ of the form $\rho_{g,\lambda}\otimes\chi_{\ell}^{m}$ and coefficients
    $\beta_{1},\ldots,\beta_{v}\in K_{\lambda}$, such that for every prime $p\nmid\ell N$,
    \begin{equation}\label{eq-tracerelation}
        0=a_{f}(p)=\sum_{j=1}^{u}\alpha_{j}\mathrm{tr}(\eta_{j}(\mathrm{Frob}_{p}))+\sum_{j=1}^{v}\beta_{j}\mathrm{tr}(\rho_{j}(\mathrm{Frob}_{p})).
    \end{equation} 
    By Lemma \ref{lem-rigidity}, the $2$-dimensional representations $\rho_{1},\dots,\rho_{v}$ are pairwise non-isomorphic. Since the $\eta_{j}$ are $1$-dimensional but the $\rho_{j}$ are $2$-dimensional, $\eta_{j}$ is not isomorphic to $\rho_{j'}$ for all $j,j'$. 
    Moreover, the $\eta_j$ and the $\rho_{j}$ are irreducible, and thus they are semisimple.
    By Lemma \ref{lem-charindep},
    $$\alpha_{1}=\cdots=\alpha_{u}=\beta_{1}=\cdots=\beta_{v}=0.$$
    Hence, $f_{S}^{\mathrm{new}}=0$ as desired.
\end{proof}

\section{Proof of Theorem \ref{thm: quan-1}}\label{sec-proof2}

Throughout this section, we assume that $f=\sum_{i\in I}c_iD^{m_i}f_i$,
where $I$ is a finite index set, $c_i\in\mathbb{C}^{\times}$, $m_i\geq 0$, and each $f_i$ is either an Eisenstein series or a non-CM normalized Hecke eigenform of weight $k_i$.
For each $i\in I$, let $K_{i}$ be a number field containing the Fourier coefficients of $f_i$ and $\mathcal{O}_{i}$ be a ring of integers of $K_i$.
Let $\ell$ be a prime with $(\ell,N)=1$, and let $\lambda_i$ be a prime ideal of $\mathcal{O}_i$ above $\ell$.
Let $\mathcal{O}_{i,\lambda_i}$ denote the $\lambda_i$-adic completion of $\mathcal{O}_i$, and $\varpi_i$ be a uniformizer of $\mathcal{O}_{i,\lambda_i}$.
By \eqref{eq-eiscoeff}, \eqref{eq-etwocoeff}, and Proposition~\ref{prop-cuspgalrep}, after multiplying $f_i$ by a nonzero constant, there exists a Galois representation $\rho_{i,\lambda_i}:G_{\mathbb{Q}}\to \mathrm{GL}_{2}(\mathcal{O}_{i,\lambda_i})$ such that for every prime $p\nmid N\ell$,
\[ \tr\left(\rho_{i,\lambda_i}(\Frob_p)\right)=a_{f_i}(p), \quad \det\left(\rho_{i,\lambda_i}(\Frob_p)\right)=p^{k_i-1}. \]
Let $I_1$ be the subset of $I$ consisting of $i$ such that $f_i$ is a non-CM normalized Hecke eigenform. 
To prove Theorem \ref{thm: quan-1} \textnormal{(ii)}, we recall a result on the independence of Galois representations attached to modular forms.

\begin{theorem}\label{thm4}
    For distinct $i,j\in I_1$, assume that $f_i$ is not Galois conjugate to a Dirichlet twist of $f_j$. 
    Then, for all but finitely many prime $\ell$, we have 
    \begin{equation*}
        \begin{aligned}
            &\left\{\left(\rho_{i,\lambda_i}(g)\right)_{i\in I_1} : g\in G_{\mathbb{Q}} \right\}\\
            &= \left\{(M_i)_{i\in I_1} : M_i\in \mathrm{GL}_{2}(\mathcal{O}_{i,\lambda_i}) \text{ and } \det M_i\in \left(\mathbb{Z}_{\ell}^{\times}\right)^{k_i-1} \text{ for each } i\in I_1\right\}.
        \end{aligned}
    \end{equation*}
\end{theorem}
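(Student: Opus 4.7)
The plan is to derive Theorem \ref{thm4} by combining a ``big image'' theorem for each individual non-CM Galois representation with a Goursat-type independence argument that eliminates all possible coincidences among the $\rho_{i,\lambda_i}$. I would first handle the single-representation case, then upgrade to joint surjectivity by induction on $|I_1|$.

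For each $i \in I_1$, since $f_i$ is a non-CM Hecke eigenform, Ribet's open image theorem, together with its higher-weight refinements by Momose, Nekov\'a\v{r}, and Dieulefait, asserts that $\rho_{i,\lambda_i}(G_\mathbb{Q})$ is open in
\[ H_i := \bigl\{ M \in \mathrm{GL}_2(\mathcal{O}_{i,\lambda_i}) : \det M \in (\mathbb{Z}_\ell^\times)^{k_i-1} \bigr\}, \]
and in fact equals $H_i$ for all but finitely many primes $\ell$. Since $I_1$ is finite, discarding finitely many $\ell$ makes this equality simultaneous for every $i \in I_1$.

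Let $G_\ell \subseteq \prod_{i \in I_1} H_i$ denote the joint image. Assuming by induction that the projection of $G_\ell$ onto $\prod_{j<n} H_{i_j}$ is surjective, apply Goursat's lemma to the image of $G_\ell$ inside $\bigl(\prod_{j<n} H_{i_j}\bigr) \times H_{i_n}$. Failure of full joint surjectivity would produce a common nontrivial quotient $Q$ of a subquotient of $\prod_{j<n} H_{i_j}$ and of a subquotient of $H_{i_n}$, together with an isomorphism identifying the two induced maps. For $\ell$ sufficiently large, the mod-$\ell$ image $\overline{\rho}_{i,\lambda_i}(G_\mathbb{Q})$ contains $\mathrm{SL}_2(\mathbb{F}_\ell)$, which is simple nonabelian for $\ell \geq 5$, so any such $Q$ must be ``large'' and essentially $\mathrm{PGL}_2$-type. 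Combined with the cyclotomic coupling $\det \rho_{i,\lambda_i} = \chi_\ell^{k_i-1}$, this rigidity forces the Goursat isomorphism to come from an isomorphism between subquotients of $\rho_{i_n,\lambda_{i_n}}$ and $\rho_{i_j,\lambda_{i_j}}$ for some $j<n$, up to a Dirichlet character twist and a Galois conjugation. By the Brauer--Nesbitt theorem and the Chebotarev density theorem, such an isomorphism would produce a congruence
\[ a_{f_{i_n}}(p) \equiv \chi(p)\,\sigma\!\bigl(a_{f_{i_j}}(p)\bigr) \pmod{\lambda_{i_n}} \]
for all primes $p \nmid N\ell$. Since $\ell$ can be made arbitrarily large, strong multiplicity one upgrades these congruences to an identity of Fourier coefficients, contradicting the hypothesis that $f_{i_n}$ is not Galois conjugate to any Dirichlet twist of $f_{i_j}$.

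The main obstacle is the last step: one must carefully track every possible Goursat coincidence, match it to a forbidden conjugate-twist relation, and handle the determinant coupling coherently (since all $\det \rho_{i,\lambda_i}$ factor through the same cyclotomic character). In practice, rather than redoing the analysis from scratch, one would invoke existing joint big-image theorems for products of modular Galois representations, due to Ribet in the two-form case and extended by Loeffler and others to arbitrary finite families, where the non-conjugate-twist hypothesis is precisely the condition that makes the joint image as large as the determinant coupling permits.
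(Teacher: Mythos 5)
Your proposal is correct and takes essentially the same route as the paper: the paper's proof consists precisely of citing Loeffler's joint big-image theorem (Theorem 3.2.2 of \cite{L17}) for the two-form case and noting that the general case follows as in \cite[Lemma 2.3]{CL19}, which is exactly the appeal to the literature you make at the end after sketching the underlying Ribet/Goursat mechanism. The intermediate Goursat discussion is a reasonable outline of what those cited results prove, so there is no substantive divergence.
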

\begin{proof}
    Loeffler \cite[Theorem 3.2.2]{L17} proved the case $n=2$ and remarked that the result should hold for general $n$. 
    The general case can be obtained by following the proof of \cite[Lemma 2.3]{CL19}.
\end{proof}

For $f=\sum_{i\in I}c_i D^{m_i}f_i$, we define $\mathcal{P}(f)$ to be the set of primes $\ell$ such that 
\begin{enumerate}
    \item[\textnormal{(i)}] for each $i\in I$, $(k_i-1,\ell(\ell-1))=1$ and $(k_i+2m_i-1,\ell(\ell-1))=1$, 
    \item[\textnormal{(ii)}] for some choice of prime ideals $(\lambda_i)_{i\in I_1}$ lying over $\ell$,   
    \begin{equation*}
        \begin{aligned}
            &\left\{\left(\rho_{i,\lambda_i}(g)\right)_{i\in I_1} : g\in G_{\mathbb{Q}} \right\}\\
            &= \left\{(M_i)_{i\in I_1} : M_i\in \mathrm{GL}_{2}(\mathcal{O}_{i,\lambda_i}) \text{ and } \det M_i\in \left(\mathbb{Z}_{\ell}^{\times}\right)^{k_i-1} \text{ for each } i\in I_1\right\}
        \end{aligned}
    \end{equation*}
    \item[\textnormal{(iii)}] for each $i\in I_1$, $\ell$ is unramified in $K_i$.
\end{enumerate}
Theorem \ref{thm4} implies that $\mathcal{P}(f)$ is nonempty. 
For non-negative integers $m$ and $n$, let $\overline{\rho_{i,\lambda_i}\otimes \chi_{\ell}^{m}}^{n}$ be the reduction of $\rho_{i,\lambda_i}\otimes \chi_{\ell}^{m}$ modulo $\varpi_i^{n}$.
Let $J$ be a subset of $I$. 
For $\boldsymbol{\lambda}:=(\lambda_i)_{i\in J}$ and $\mathbf{m}:=(m_i)_{i\in J}$, define $\overline{\rho_{J,\boldsymbol{\lambda},\mathbf{m}}}^{n}$ by 
\[ \overline{\rho_{J,\boldsymbol{\lambda},\mathbf{m}}}^{n}(g):=\left(\overline{\rho_{i,\lambda_i}\otimes \chi_{\ell}^{m_i}}^{n}(g) \right)_{i\in J}. \]
The following lemma describes the image of $\overline{\rho_{I_1,\boldsymbol{\lambda},\mathbf{m}}}^{n}$ in the case where $\ell\in \mathcal{P}(f)$.

\begin{lemma}\label{lem1}
    Assume that $\ell\in \mathcal{P}(f)$. 
    Then, we have 
    \begin{equation*}
        \begin{aligned}
            &\overline{\rho_{I_1,\boldsymbol{\lambda},\mathbf{m}}}^{n}\left(G_{\mathbb{Q}}\right)\\
            &=\left\{(M_i)_{i\in I_1}\in \prod_{i\in I_1}\mathrm{GL}_{2}(\mathcal{O}_{i,\lambda_i}/\varpi_i^{n}) : \det M_i=\overline{u}^{k_i+2m_i-1} \text{ for some } \overline{u}\in \left(\mathbb{Z}/\ell^{n}\mathbb{Z}\right)^{\times}\right\}.
        \end{aligned}
    \end{equation*}

\end{lemma}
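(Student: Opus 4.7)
The plan is to prove the two inclusions separately. For the inclusion $\subseteq$, the key observation is that the determinant of each twisted representation is a pure power of the cyclotomic character: since $\det\rho_{i,\lambda_i}=\chi_\ell^{k_i-1}$ (by Proposition \ref{prop-cuspgalrep} together with Chebotarev density), we have $\det(\rho_{i,\lambda_i}\otimes\chi_\ell^{m_i})=\chi_\ell^{k_i+2m_i-1}$. Hence for any $g\in G_{\mathbb{Q}}$, setting $\bar u:=\chi_\ell(g)\bmod\ell^n\in(\mathbb{Z}/\ell^n\mathbb{Z})^\times$ gives $\det(\overline{\rho_{i,\lambda_i}\otimes\chi_\ell^{m_i}}^{\,n}(g))=\bar u^{k_i+2m_i-1}$ for every $i\in I_1$, with a \emph{common} $\bar u$.

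For the reverse inclusion, I would start with a tuple $(\bar M_i)_{i\in I_1}$ and a common $\bar u\in(\mathbb{Z}/\ell^n\mathbb{Z})^\times$ satisfying $\det\bar M_i=\bar u^{k_i+2m_i-1}$ for all $i$. First lift $\bar u$ to some $u\in\mathbb{Z}_\ell^\times$ (using surjectivity of $\mathbb{Z}_\ell^\times\twoheadrightarrow(\mathbb{Z}/\ell^n\mathbb{Z})^\times$), and set $\bar N_i:=\bar u^{-m_i}\bar M_i\in\mathrm{GL}_2(\mathcal{O}_{i,\lambda_i}/\varpi_i^n)$, so that $\det\bar N_i=\bar u^{k_i-1}$. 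The next step is to lift each $\bar N_i$ to some $N_i\in\mathrm{GL}_2(\mathcal{O}_{i,\lambda_i})$ whose determinant equals $u^{k_i-1}$ \emph{exactly}. Any set-theoretic lift $N_i'$ has determinant congruent to $u^{k_i-1}$ modulo $\varpi_i^n$; since $\det\bar N_i$ is a unit, at least one entry of $\bar N_i$ must itself be a unit, and one can perturb the diagonally opposite entry of $N_i'$ within $\varpi_i^n\mathcal{O}_{i,\lambda_i}$ to move the determinant to exactly $u^{k_i-1}$.

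With the $N_i$ in hand, I would apply condition (ii) of $\mathcal{P}(f)$ (Theorem \ref{thm4}): since $\det N_i=u^{k_i-1}\in(\mathbb{Z}_\ell^\times)^{k_i-1}$ for every $i\in I_1$, there exists $g\in G_{\mathbb{Q}}$ with $\rho_{i,\lambda_i}(g)=N_i$ simultaneously for all $i$. Comparing determinants then gives $\chi_\ell(g)^{k_i-1}=u^{k_i-1}$ in $\mathbb{Z}_\ell^\times$. Invoking condition (i) of $\mathcal{P}(f)$, the hypothesis $\gcd(k_i-1,\ell(\ell-1))=1$ makes the $(k_i-1)$-th power map an automorphism of $\mathbb{Z}_\ell^\times$ (bijective both on the finite torsion part of order dividing $\ell-1$ and on the torsion-free pro-$\ell$ part $1+\ell\mathbb{Z}_\ell$). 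This forces $\chi_\ell(g)=u$, so
\[ (\rho_{i,\lambda_i}\otimes\chi_\ell^{m_i})(g)=u^{m_i}N_i\equiv\bar u^{m_i}\bar N_i=\bar M_i\pmod{\varpi_i^n}. \]

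The main technical point I anticipate is the simultaneous determinant adjustment in the lifting step: all $N_i$ must have determinants given by the same $u^{k_i-1}$ extracted from one common lift $u$ of $\bar u$, which is exactly what condition (ii) of $\mathcal{P}(f)$ requires in order to produce a single $g$ realizing all $N_i$ at once. The coprimality hypothesis $\gcd(k_i-1,\ell(\ell-1))=1$ from condition (i) of $\mathcal{P}(f)$ enters crucially in two places: it ensures $(\mathbb{Z}_\ell^\times)^{k_i-1}=\mathbb{Z}_\ell^\times$, so that Theorem \ref{thm4} places no obstruction on our chosen determinants $u^{k_i-1}$, and it upgrades the identity $\chi_\ell(g)^{k_i-1}=u^{k_i-1}$ to $\chi_\ell(g)=u$ at the very last step.
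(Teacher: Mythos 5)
Your proof is correct and follows essentially the same route as the paper's: reduce to the untwisted representations by dividing out $\bar u^{m_i}$, invoke condition (ii) of $\mathcal{P}(f)$ to realize the resulting tuple in the image, and use the coprimality condition (i) to force $\chi_\ell(g)=u$ so that re-twisting recovers $(\bar M_i)_i$. You supply slightly more detail than the paper (the explicit $\subseteq$ direction via $\det(\rho_{i,\lambda_i}\otimes\chi_\ell^{m_i})=\chi_\ell^{k_i+2m_i-1}$, and the lift of $\bar N_i$ to a matrix with exact determinant $u^{k_i-1}$), but the argument is the same.
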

\begin{proof}
    By Proposition \ref{prop-cuspgalrep}, it is enough to show that $\overline{\rho_{I_1,\boldsymbol{\lambda},\mathbf{m}}}^{n}$ contains
    \begin{equation*}
        \begin{aligned}
        \left\{(M_i)_{i\in I_1}\in \prod_{i\in I_1}\mathrm{GL}_{2}(\mathcal{O}_{i,\lambda_i}/\varpi_i^{n}) : \det M_i=\overline{u}^{k_i+2m_i-1} \text{ for some } \overline{u}\in \left(\mathbb{Z}/\ell^{n}\mathbb{Z}\right)^{\times}\right\}.
        \end{aligned}
    \end{equation*}
    Assume that $M_i\in \mathrm{GL}_{2}(\mathcal{O}_{i,\lambda_i}/\varpi_i^{n})$ with $\det M_i=\overline{u}^{k_i+2m_i-1}$ for some $\overline{u}\in (\mathbb{Z}/\ell^{n}\mathbb{Z})^{\times}$.
    Then, $\overline{u}^{-m_i}M_i\in \mathrm{GL}_{2}(\mathcal{O}_{i,\lambda_i}/\varpi_i^{n})$ and $\det \left(\overline{u}^{-m_i}M_i\right) = \overline{u}^{k_i-1}$.
    Since $\ell\in \mathcal{P}(f)$, it follows that there is a prime $p$ such that $\overline{u}^{-m_i}M_i=\overline{\rho_{i,\lambda_i}}^{n}(\Frob_p)$.
    Moreover, since $\ell\in \mathcal{P}(f)$ and $(\ell(\ell-1),k_i-1)=1$, we have $\overline{u}=\overline{p}$ in $\mathbb{Z}/\ell^{n}\mathbb{Z}$, where $\overline{p}$ is the reduction of $p$ modulo $\ell^{n}$.
    It follows that
    \[ \overline{\rho_{i,\lambda_i}\otimes\chi_{\ell}^{m_i}}^{n}(\Frob_p)=M_i.\]
    Therefore, we have
    \[\overline{\rho_{I_1,\boldsymbol{\lambda},\mathbf{m}}}^{n}(\Frob_p)=(M_i)_{i\in I_1}. \]
\end{proof}

Let $S$ be a set, and suppose that there exists an injection $\iota : S \hookrightarrow \mathbb{Z}_{\ell}^{m}$.
For a positive integer $n$, let $\pi_n:\mathbb{Z}_{\ell}^{m}\to \left(\mathbb{Z}_{\ell}/\ell^{n}\mathbb{Z}_{\ell}\right)^{m}$ be the canonical projection.
We say that $\dim_{M}S\leq d$ if 
\[ \# \pi_{n}\left(\iota(S)\right)=O\left(\ell^{nd}\right) \text{ as } n\to \infty. \]
Note that if $S$ is a $\ell$-adic Lie group of dimension $d$, then 
\[  \dim_{M}S\leq d. \]
To prove Theorem \ref{thm: quan-1} \textnormal{(ii)}, we use the following theorem of Serre \cite[Theorem 10]{S}, which may be viewed as Chebotarev density theorem for infinite Galois extensions with compact $\ell$-adic Lie Galois group.

\begin{theorem}\label{thm5}
    Let $K$ be a number field and let $E/K$ be a Galois extension, with Galois group $G=\mathrm{Gal}(E/K)$.
    Assume that $G$ is a compact $\ell$-adic Lie group of dimension $N$ and that $C$ is a subset of $G$ stable under conjugation with $\dim_{M}C\leq d<N$.
    Then, we get 
    \[ \# \{\mathfrak{p} : N(\mathfrak{p})\leq X \text{ and } \Frob_{\mathfrak{p}}\in C \}=O\left(\frac{X/\log X}{\epsilon(X)^{(N-d)/N}}\right), \]
    where $N(\mathfrak{p})$ denotes the absolute norm of a prime ideal $\mathfrak{p}$ of $K$ and 
    \[\epsilon(X):=(\log X)(\log \log X)^{-2}(\log \log \log X)^{-1}. \]
\end{theorem}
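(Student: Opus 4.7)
The plan is to deduce the bound by a tower argument: approximate $G$ by its finite quotients $G/H_n$, apply the unconditional effective Chebotarev density theorem to each finite level, and then optimize $n$ as a function of $X$.

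First, since $G$ is a compact $\ell$-adic Lie group of dimension $N$, one finds a descending chain of open normal subgroups $H_1\supset H_2\supset\cdots$ with $\bigcap_n H_n=\{1\}$ and $[G:H_n]\asymp \ell^{nN}$; such a chain is produced, for instance, by fixing a uniformly powerful open pro-$\ell$ subgroup $U\subseteq G$ and using its lower central (or Frattini) series, together with the fact that $U$ has finite index in $G$. Let $K_n:=E^{H_n}$ and let $C_n\subseteq G/H_n\cong \mathrm{Gal}(K_n/K)$ denote the image of $C$. Since the Frobenius class of an unramified prime $\mathfrak{p}$ of $K$ in $K_n$ is the reduction of $\Frob_\mathfrak{p}\in G$, up to finitely many ramified primes one has
$$\#\{\mathfrak{p}:N(\mathfrak{p})\le X,\ \Frob_\mathfrak{p}\in C\}\le \#\{\mathfrak{p}:N(\mathfrak{p})\le X,\ \Frob_\mathfrak{p}\in C_n\}.$$
The hypothesis $\dim_M C\le d$ forces $|C_n|=O(\ell^{nd})$, so $|C_n|/[K_n:K]=O(\ell^{-n(N-d)})$.

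Next, I would invoke the unconditional effective Chebotarev density theorem of Lagarias--Odlyzko on each $K_n/K$: for a conjugation-stable subset $C_n\subseteq \mathrm{Gal}(K_n/K)$,
$$\#\{\mathfrak{p}:N(\mathfrak{p})\le X,\ \Frob_\mathfrak{p}\in C_n\}\ll \frac{|C_n|}{[K_n:K]}\,\mathrm{Li}(X)+|C_n|\,X\,\exp\!\Bigl(-c\sqrt{\tfrac{\log X}{\log d_{K_n}}}\Bigr),$$
where $d_{K_n}$ is the absolute discriminant of $K_n$. Because $E/K$ is a compact $\ell$-adic Lie extension of dimension $N$, ramification in $K_n/K$ is tightly controlled, and one obtains a bound of the shape $\log d_{K_n}\ll n[K_n:K]\log\ell\ll n\ell^{nN}$. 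Substituting these estimates and balancing the main term $\ell^{-n(N-d)}\,X/\log X$ against the error by choosing $n=n(X)$ so that $\ell^{nN}$ grows like $(\log X)(\log\log X)^{-2}(\log\log\log X)^{-1}=\epsilon(X)$ yields exactly the stated bound $O\bigl((X/\log X)\,\epsilon(X)^{-(N-d)/N}\bigr)$.

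The principal obstacle is the discriminant estimate $\log d_{K_n}\ll n\ell^{nN}\log\ell$: proving it requires a careful analysis of ramification in the infinite tower $E/K$. At primes $v\nmid \ell$ of $K$ the inertia subgroup at $v$ in $G$, being an $\ell$-adic Lie group mapping to the finite tame quotient, is actually finite, so ramification there stabilizes and contributes only a bounded factor as $n\to \infty$. At primes $v\mid \ell$ inertia can be infinite, but its image in $G/H_n$ still has size $\le \ell^{nN}$, and the higher-ramification contribution to the different must be bounded using the $\ell$-adic Lie filtration on inertia. Once this ramification input is in hand, the remaining optimization producing the iterated-logarithm exponent $\epsilon(X)^{(N-d)/N}$ is a direct computation.
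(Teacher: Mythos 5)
The paper does not actually prove this statement: it is imported verbatim from Serre \cite[Th\'eor\`eme 10]{S}, and your proposal is essentially a reconstruction of Serre's original argument --- pass to finite quotients $G/H_n$ with $[G:H_n]\asymp\ell^{nN}$, apply the unconditional Lagarias--Odlyzko effective Chebotarev theorem at each level with $|C_n|/[G:H_n]=O(\ell^{-n(N-d)})$, prove $\log d_{K_n}=O(n[K_n:K])$, and optimize $n$ against $X$; the iterated logarithms in $\epsilon(X)$ are exactly what falls out of the range of validity $\log X\gg(\log d_{K_n})(\log\log d_{K_n})(\log\log\log d_{K_n})$ of the finite-level bound. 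One step of your ramification analysis is wrong as stated: at a prime $v\nmid\ell$ the inertia subgroup of $G$ need \emph{not} be finite --- only its wild part is, being pro-$p$ for $p\neq\ell$ inside a group with an open pro-$\ell$ subgroup, while the tame quotient is procyclic and can have infinite image (a copy of $\mathbb{Z}_\ell$). The discriminant bound survives because the ramification at such $v$ is eventually tame, so the exponent of $v$ in the different of $K_n/K$ is $e_v-1+O(1)$ and still contributes only $O([K_n:K])$ to $\log d_{K_n}$. The genuinely hard input, which you correctly isolate but do not supply, is Sen's theorem on the higher ramification filtration of the inertia at $v\mid\ell$ in an $\ell$-adic Lie extension; this is the heart of Serre's Proposition on conductors and is simply cited, not reproved, in the paper.
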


We first prove Theorem \ref{thm: quan-1} \textnormal{(ii)} and then prove Theorem \ref{thm: quan-1} \textnormal{(i)}.

\begin{proof}[Proof of Theorem \ref{thm: quan-1} \textnormal{(ii)}]
    Assume that the Fourier coefficients of $f$ are algebraic. 
    Let $L$ be a number field containing all the constants $c_i$ for $i\in I$.  
    Fix a prime $\ell\in\mathcal{P}(f)$ such that each $c_i$ maps to a unit in $\mathcal{O}_L\otimes {\mathbb{Z}_{\ell}}$.
    Note that for each prime $p$ with $p\nmid N\ell$, we have 
    \[ a_{D^{m_i}f_i}(p)=\tr\left(\left(\rho_{i,\lambda_i}\otimes \chi_{\ell}^{m_i}\right)(\Frob_p)\right). \]
    Hence, for any prime $p\nmid N\ell$, we have $a_f(p)=0$ if and only if
    \begin{equation}\label{eq3}
        \sum_{i\in I} c_i \tr\left(\left(\rho_{i,\lambda_i}\otimes \chi_{\ell}^{m_i}\right)(\Frob_p) \right)=0.
    \end{equation}

    Let $G$ be the image of $G_{\mathbb{Q}}$ under $\left(\rho_{i,\lambda_i}\otimes \chi_{\ell}^{m_i}\right)_{i\in I}$. 
    Since $G_{\mathbb{Q}}$ is compact, it follows that $G$ is a compact $\ell$-adic Lie group. 
    Let $N_{G}$ be a dimension of $G$. 
    Let $S$ be a subset of $G$ consisting of $(M_{i})_{i\in I}$ such that 
    \[ \sum_{i\in I} c_i \tr(M_i)=0. \]
    By \eqref{eq3}, for any prime $p\nmid N\ell$, we have $a_f(p)=0$ if and only if $\left(\left(\rho_{i,\lambda_i}\otimes\chi_{\ell}^{m_i}\right)(\Frob_p)\right)_{i\in I}\in S$. Hence, we get
    \[ \{p:p\nmid N\ell \text{ and } \left(\left(\rho_{i,\lambda_i}\otimes\chi_{\ell}^{m_i}\right)(\Frob_p)\right)_{i\in I}\in S \} =  \{p : p\nmid N\ell \text{ and } a_f(p)=0\}.\]
    
    Let $\boldsymbol{\lambda}:=(\lambda_i)_{i\in I}$ and $\mathbf{m}:=(m_i)_{i\in I}$.
    For a positive integer $n$, let 
    \[\pi_n:\prod_{i\in I} \mathrm{GL}_{2}(\mathcal{O}_{i,\lambda_i})\to \prod_{i\in I}\mathrm{GL}_{2}(\mathcal{O}_{i,\lambda_i}/\varpi_i^{n})\]
    be the canonical projection. 
    For an integer $a$ with $1\leq a<\ell^{n}$ and $(a,\ell)=1$, we define $G_{a}^{n}$ by the image under $\overline{\rho_{I,\boldsymbol{\lambda}, \mathbf{m}}}^{n}$ of the set of Frobenius elements $\Frob_p$ for primes $p\nmid N\ell$ satisfying $p\equiv a\pmod{\ell^{n}}$.
    Since $(k_i+2m_i-1,\ell(\ell-1))=1$ for each $i\in I$, we obtain a disjoint decomposition
    \[ \pi_n(G)=\bigsqcup_{a} G_{a}^{n}, \]
    where the union runs over all $a$ with $1\leq a<\ell^{n}$ and $(a,\ell)=1$.
    
    Note that if $f_i$ is an Eisenstein series, then the trace of $\rho_{i,\lambda_i}$ is the sum of two characters. 
    Consequently, for each $(M_i)_{i\in I}\in G_{a}^{n}$, the component $M_i$ is determined uniquely for $i\in I\setminus I_1$. 
    By Lemma \ref{lem1}, it follows that $G_a^{n}$ consists of those $(M_i)_{i\in I}$ such that the components $M_i$ are fixed for $i\in I\setminus I_1$ and $\det M_i=\overline{a}^{k_i+2m_i-1}$ for $i\in I_1$. 
    Here, $\overline{a}$ denotes the reduction of $a$ modulo $\ell^{n}$.
    Since $I_1$ is nonempty, we fix $j\in I_1$ with $c_j\neq 0$.
    If $(M_i)_{i\in I}\in \pi_n(S)$, then 
    \[ \sum_{i\in I}c_i\tr(M_i) = \sum_{i\in I\setminus\{j\}} c_i \tr(M_i) + c_j \tr (M_j)=0. \]
    It follows that  
    \[ \tr(M_j)=-c_{j}^{-1}\sum_{i\in I\setminus\{j\}} c_i \tr(M_i). \]
    By a direct computation (see \cite[Lemma~3.4]{CL23}), for any $x,y\in \mathcal{O}_{j,\lambda_j}/\varpi_{j}^{n}$, we have
    \begin{equation*}
    \begin{aligned}
         \#&\left\{M_j\in \mathrm{GL}_{2}(\mathcal{O}_{j,\lambda_j}/\varpi_j^{n}) : \tr M_j =x \text{ and } \det M_j=y  \right\}\\
        &\ll \frac{1}{\# (\mathcal{O}_{j,\lambda_j}/\varpi_j^{n})} \#\left\{M_j\in \mathrm{GL}_{2}(\mathcal{O}_{j,\lambda_j}/\varpi_j^{n}) : \det M_j=y  \right\}.
    \end{aligned}
    \end{equation*}
    Consequently, for any $a$ with $1\leq a < \ell^{n}$ and $(a,\ell)=1$, we get
\[ \# \left(\pi_n(S)\cap G_a^{n}\right) \ll  \frac{\# G_{a}^{n}}{\# (\mathcal{O}_{j,\lambda_j}/\varpi_j^{n})}. \]
Summing over all such $a$ with $1\leq a < \ell^{n}$ and $(a,\ell)=1$, we obtain
\[ \#\pi_n(S)\ll \frac{\# \pi_n(G)}{\# (\mathcal{O}_{j,\lambda_j}/\varpi_j^{n})}. \]
Since $\#(\mathcal{O}_{j,\lambda_j}/\varpi_j^{n})=\ell^{nf_j}$, we deduce that
\[ \dim_{M} S\leq N_{G}-f_j, \]
where $f_j$ is the residue degree of $\lambda_j$. 
By Theorem \ref{thm5}, we get
\[ \#\{p\leq X : \Frob_p\in S\} = O\left(\frac{X/\log X}{\epsilon(X)^{\alpha}}\right), \]
where $\alpha:=f_j/N$.

To complete the proof of Theorem \ref{thm: quan-1} \textnormal{(ii)}, it remains to treat the case where the Fourier coefficients of $f$ are not necessarily algebraic. 
    Let $E$ be the $\overline{\mathbb{Q}}$-vector space spanned by $\{c_i : i\in I\}$ and choose a $\overline{\mathbb{Q}}$-basis $\{e_1,\dots, e_s\}$ of $E$.
    Then, we can write
    \[ f=\sum_{r=1}^{s} e_r g_r \]
    where each $g_r$ has algebraic Fourier coefficients. 
    Since $\{e_1,\dots, e_s\}$ is linearly independent over $\overline{\mathbb{Q}}$, it follows that 
    \[ \{p : a_f(p)=0\} = \cap_{i=1}^{s} \{p : a_{g_i}(p)=0  \}. \]
    Therefore, we complete the proof of Theorem \ref{thm: quan-1} \textnormal{(ii)}.
\end{proof}

Now, we prove Theorem \ref{thm: quan-1} \textnormal{(i)}.

\begin{proof}[Proof of Theorem \ref{thm: quan-1} \textnormal{(i)}]
In the proof of Theorem \ref{thm1}, a quasimodular form $f$ can be expressed as 
\[ f=f_{E,\leq 1} + f_{S}^{\mathrm{new}} + g, \]
where $g$ satisfies $a_g(p)=0$ for all primes $p$ with $p\nmid N$. 
Thus, we may reduce to the case 
\[ f=f_{E,\leq 1} + f_{S}^{\mathrm{new}}. \]
By \eqref{eq-finalform}, we can express $f$ as $f=\sum_{i\in I}c_if_i$, where $f_i$ is an iterated derivatives of an Eisenstein series or of a Hecke eigenform. 
 Assume further that, for every $i\in I$, the Fourier coefficients of $f_i$ are algebraic integers.
 
First, assume that all Fourier coefficients of $f$ are algebraic.
 Let $K$ be a number field containing $c_i$ and the Fourier coefficients of $f_i$ for $i\in I$.
 After multiplying $f$ by a suitable nonzero constant, we may assume that the Fourier coefficients of $f$ are in $\mathcal{O}_{K}$.
 By assumption, there is a prime $p_0$ with $p_0\nmid N$ such that $a_f(p_0)\neq 0$.
 Thus, there is a prime ideal $\lambda$ of $\mathcal{O}_{K}$ such that $a_f(p_0)\not\equiv 0 \pmod{\lambda}$.
 Let $\mathcal{O}_{\lambda}$ be the $\lambda$-adic completion of $\mathcal{O}_{K}$, and let $\varpi$ be a uniformizer of $\mathcal{O}_{\lambda}$. Let $v_{\lambda}$ be the $\lambda$-adic valuation on $K$, and define
     \[ n:=\max\{1, 1-\min_{i\in I} v_{\lambda}(c_i)  \}. \]
 By \eqref{eq-tr-cusp}, \eqref{eq-eiscoeff}, and \eqref{eq-etwocoeff}, for each $i\in I$, there is a Galois representation $\rho_i : G_{\mathbb{Q}}\to \mathrm{GL}_{2}(\mathcal{O}_{\lambda})$ such that for all $p\nmid N\ell$
 \[ a_{f_i}(p)=\tr\left(\rho_{i}(\Frob_p)\right), \]
  where $(\ell)=\lambda\cap \mathbb{Z}$.
    For each $i\in I$, we consider the reduction $\overline{\rho}_{i}: G_{\mathbb{Q}}\to \mathrm{GL}_{2}(\mathcal{O}_{\lambda}/\varpi^{n})$ of $\rho_i$ modulo $\varpi^{n}$.
    Then, for each prime $p$ with $p\nmid N\ell$, we have
     \[ \tr \left(\overline{\rho_i}(\Frob_p)\right)\equiv a_{f_i}(p) \pmod{\varpi^{n}}. \]
     
     Let $L_i$ be the fixed field of the kernel of $\overline{\rho_i}$ and $L$ be the compositum of the fields $\{L_i\}$ for $i\in I$.
     Then, $L$ is a finite extension of $\mathbb{Q}$ since $L_i$ is a finite extension of $\mathbb{Q}$ for each $i$.
     Hence, if $p$ satisfies $p\nmid N\ell$ and $[\Frob_{p}]=[\Frob_{p_0}]$ in $\mathrm{Gal}(L/\mathbb{Q})$, then 
     \[ a_{f}(p_0)\equiv a_{f}(p) \pmod{\varpi^n}. \]
     It follows that 
     \begin{equation}\label{eq2}
        a_{f}(p)\neq 0. 
     \end{equation}
     By the Chebotarev density theorem, a positive proportion of primes $p$ satisfy \eqref{eq2}. 

    To complete the proof, it remains to remove the hypothesis that the Fourier coefficients of $f$ are algebraic. This follows from the same argument as in the proof of Theorem \ref{thm: quan-1} \textnormal{(ii)}.
\end{proof}

\section*{Acknowledgement}

The authors thank Dohoon Choi for pointing out that Theorem \ref{thm1} can be obtained from the independence of characters for a Galois group.

\end{document}